\newcommand{\HH}{\mathbb{H}}
\newcommand{\RR}{\mathbb{R}}
\newcommand{\I}{\rm{I}}
\newcommand{\II}{{\rm I}\!{\rm I}}
\newcommand{\VR}{{\rm V}_{\rm R}}
\newcommand{\Eps}{{\rm Eps}}
\newtheorem{theorem}{Theorem}[section]
\newtheorem{maintheorem}{Theorem}[section]
\newtheorem{defi}{Definition}[section]
\newtheorem{remark}{Remark}[section]
\title{W-volume for planar domains with circular boundary}
\author{Jeffrey Brock and Franco Vargas Pallete}
\thanks{Work of the first author was supported by NSF grant DMS-1849892. The second author was partially supported by NSF grant DMS-2001997}
\address{Department of Mathematics  \\
 Yale University \\
New Haven, CT 06511\\
U.S.A.}
\address{Department of Mathematics  \\
 Yale University \\
New Haven, CT 06511\\
U.S.A.}
\begin{document}

\maketitle

\begin{abstract}
    We extend the notion of Epstein maps to conformal metrics on submanifolds of the unit sphere $\mathbb{S}^n=\partial_\infty\mathbb{H}^{n+1}$. Using this construction for curves in $\mathbb{S}^2$, we define the $W$-volume for conformal metrics on domains in $\overline{\mathbb{C}}=\mathbb{S}^2$ with round circles as boundaries. We show that the $W$-volume is a realization in $\mathbb{H}^3$ of the determinant of the Laplacian. We use this and work of Osgood, Phillips and Sarnak to show that a classical Schottky uniformization of a genus $g$ Riemann surface has renormalized volume bounded by $(6g-8)\pi$, and by $-2\pi$ under further assumptions. This gives a partial answer to a question of Maldacena. We also then provide a $\mathbb{H}^3$ realization of the Loewner energy of a $C^{2,\alpha}$ Jordan curve.
\end{abstract}

\section{Introduction}

Given a conformally compact hyperbolic $3$-manifold $(M,h)$ and a metric $g$ on its conformal boundary, we can define the functional $W$-volume as follows (\cite{Graham}, \cite{KrasnovSchlenker08}). Associated to $g$ there exists a equidistant foliation $\lbrace N_r\rbrace_{r\gg0}$ in $M$ expanding towards the conformal boundary $\partial M$ so that $\lim_{r\rightarrow\infty} 4e^{-2r}h|_{N_r}=g$. The $W$-volume of $(M,h,g)$ can be defined as

\begin{equation}\label{eq:OGWvol}
    W(M,h,g) := vol(int(N_r)) - \frac12 \int_{N_r}Hda + r\pi\chi(\partial M),
\end{equation}
where $int(N_r)$ is the region bounded by $N_r$, $H$ is the mean curvature of $N_r$ with respect to the normal vector pointing towards $int(N_r)$, $da$ is the area element and $\chi(.)$ denotes Euler characteristic. It is part of the definition of $W$-volume to verify that the right-hand side of (\ref{eq:OGWvol}) does not depend on $r$. We will simplify notation by writing $W(g)$, as $(M,h)$ will be understood and for most it unchanged.

It this setup, it has been noted (see for instance \cite[Proposition 3.11]{GuillarmouMoroianuSchlenker}) that $W$-volume and the well-studied functional $\log\det'\Delta$ (the logarithm of the determinant of the Laplace-Beltrami operator, see \cite{OsgoodPhillipsSarnak}) agree on conformal metrics by an affine relation, namely:

\begin{equation}\label{eq:affinerelation}
W(e^{2\varphi}g) - W(g) = 3\pi (\log{\rm det}'\Delta(e^{2\varphi}g) - \log{\rm det}'\Delta(g)),
\end{equation}
for any conformal metric $g$ and any smooth function $\varphi:\partial M \rightarrow \mathbb{R}$.

The notion of $W$-volume can be used to define {\em renormalized volume} (denoted by $\VR$), as $\VR$ can be obtained by evaluating $W$ at the (unique) hyperbolic metric in the conformal boundary $\partial M$.  The Renormalized Volume is a geometric quantity motivated by the AdS/CFT correspondence and the calculation of gravitational action, as described by Witten and Graham (\cite{Witten98}, \cite{Graham}), which has seen increased interest through a range of applications to the study of 3-dimensional hyperbolic manfiolds and their deformation spaces.

In this article we extend the definition of $W$-volume for regions with round boundaries in $\mathbb{S}^2$, so that the relation (\ref{eq:affinerelation}) remains true . Namely, given $U\subseteq \overline{\mathbb{C}}$ with $\partial U$ a union of finitely many round circles and $e^{2\varphi}|dz|^2$ conformal metric in $U$, we define a region $B(\varphi)\subset \mathbb{H}^3$ with piecewise smooth boundary $\partial B(\varphi)$ and a distinguished subset $C(\varphi)\subset \partial B(\varphi)$, so that

\begin{maintheorem}\label{thm:main}
Under the previous conditions, if we define
\[
    W(e^{2\varphi}|dz|^2) := vol(B(\varphi)) - \frac12 \int_{\partial B(\varphi)} Hda - \frac34 Area(C(\varphi)),
\]
then $W(.)$ satisfies the affine relation (\ref{eq:affinerelation}) with $\log\det\Delta$.
\end{maintheorem}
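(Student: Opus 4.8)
The plan is to reduce the affine identity to a first-variation statement. Since both sides of (\ref{eq:affinerelation}) vanish when $\varphi$ equals the reference conformal factor, it suffices to prove that
\[
    G(\varphi) := W(e^{2\varphi}|dz|^2) - 3\pi\log\det\Delta(e^{2\varphi}|dz|^2)
\]
is constant on the space of smooth conformal factors on $U$; equivalently, that $\delta W = 3\pi\,\delta\log\det\Delta$ for every infinitesimal variation $\varphi \mapsto \varphi + t\psi$. The derivative of the right-hand side is supplied by the Polyakov--Alvarez formula of Osgood--Phillips--Sarnak \cite{OsgoodPhillipsSarnak}: differentiating it along $\varphi_t = \varphi + t\psi$ and multiplying by $3\pi$ yields
\[
    3\pi\,\delta\log\det\Delta = -\tfrac14\!\int_U\!\bigl(2\langle\nabla\varphi,\nabla\psi\rangle + 2K_0\psi\bigr)\,dA_0 - \tfrac12\!\int_{\partial U}\!k_0\,\psi\,ds_0 - \tfrac34\!\int_{\partial U}\!\partial_n\psi\,ds_0,
\]
where $K_0,k_0,dA_0,ds_0$ are computed in the flat background $|dz|^2$ (so $K_0\equiv 0$, the geodesic curvature $k_0$ of each round circle being the only curvature that survives) and $\partial_n$ is the outward normal derivative. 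The normalization $3\pi$ is exactly what produces the coefficients $-\tfrac14,-\tfrac12,-\tfrac34$, which is the first sign that they must be matched, in this order, by the three terms defining $W$.

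For the left-hand side I would use the Epstein construction of the excerpt. The region $B(\varphi)$ is bounded by the Epstein surface $\Eps(\varphi)$ lying over the interior of $U$ together with the distinguished collar $C(\varphi)$, which lies on the totally geodesic hemispheres of $\HH^3$ capping the round boundary circles $\partial_j U$. Because these hemispheres are totally geodesic, the mean curvature $H$ vanishes on $C(\varphi)$, so $\int_{\partial B(\varphi)}H\,da = \int_{\Eps(\varphi)}H\,da$ and the three terms of $W$ separate cleanly into an Epstein part $\mathrm{vol}(B(\varphi)) - \tfrac12\int_{\Eps(\varphi)}H\,da$ and the collar part $-\tfrac34\,\mathrm{Area}(C(\varphi))$. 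The first variation of the Epstein part I would compute from the known first-variation formula for $W$-volume under conformal change of the boundary data (\cite{KrasnovSchlenker08}, the mechanism behind \cite[Proposition 3.11]{GuillarmouMoroianuSchlenker}), carried out here on a domain rather than on a closed surface: this expresses $\delta(\mathrm{vol} - \tfrac12\int H)$ as the bulk Liouville variation $-\tfrac14\int_U(\cdots)\,dA_0$ plus a boundary contribution concentrated along $\partial_j U$ that accounts for the corner where $\Eps(\varphi)$ meets the geodesic hemispheres. The remaining step is to compute $\delta\,\mathrm{Area}(C(\varphi))$, i.e. how the capping region on each hemisphere moves as the Epstein surface, and hence its trace curve on the hemisphere, is displaced; this displacement is controlled by the normal derivative $\partial_n\psi$ of the variation at the circle, producing the term $-\tfrac34\int_{\partial U}\partial_n\psi\,ds_0$.

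Matching the two sides then amounts to verifying, term by term, that the bulk Liouville variations agree, that the corner contribution from the Epstein part supplies $-\tfrac12\int_{\partial U}k_0\,\psi\,ds_0$, and that the collar-area variation supplies $-\tfrac34\int_{\partial U}\partial_n\psi\,ds_0$. I expect the main obstacle to be precisely the boundary analysis near the circles $\partial_j U$: the Epstein surface degenerates as one approaches $\partial_\infty\HH^3$, $\partial B(\varphi)$ is only piecewise smooth, and one must control the edge along which $\Eps(\varphi)$ meets each hemisphere and pin down the geometry of $C(\varphi)$ over a round circle well enough to differentiate its area. Organizing the collar in Fermi coordinates off the totally geodesic hemisphere, and exploiting the reflection isometry fixing that hemisphere to tame the behavior of $\Eps(\varphi)$ near the circle, should make the area variation computable and confirm that the coefficient $-\tfrac34$ --- the genuinely new feature beyond the closed case --- is exactly what converts the closed-surface identity into the bounded Polyakov--Alvarez formula.
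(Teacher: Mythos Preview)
Your overall variational strategy---differentiate both sides along $\varphi\mapsto\varphi+t\psi$ and match the Polyakov--Alvarez terms---is exactly the paper's. But your description of the region $B(\varphi)$ and of $C(\varphi)$ is wrong, and with it the attribution of each boundary term.

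The boundary $\partial B(\varphi)$ is not two pieces but three: the Epstein surface $E_\varphi$ over $U$, a totally geodesic cap $T_\varphi$ inside each hemisphere over $\partial_j U$, and an intermediate \emph{caterpillar} collar $C_\varphi$ that interpolates between them. This $C_\varphi$ is the Epstein map of the \emph{curve} $\partial U$ (a one-parameter family of horocycle arcs), not a subset of the geodesic hemispheres; in particular $H$ does \emph{not} vanish on it (one principal curvature is identically $-1$). So your claim that $\int_{\partial B} H\,da=\int_{\Eps(\varphi)}H\,da$ is false, and the Epstein surface over $U$ does not meet the hemispheres along any curve at all---the caterpillar is what bridges the gap.

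Consequently the bookkeeping for the three Polyakov--Alvarez terms is different from what you propose. In the Schl\"afli formula \eqref{eq:WSchlafli} applied to $S_\varphi$, the dihedral-angle (``corner'') contributions all \emph{vanish}, because the caterpillar meets $E_\varphi$ tangentially (angle $0$) and meets each $T_\varphi$ orthogonally (angle $\pi/2$), and these angles do not vary with $\psi$. The bulk integral over $E_\varphi$ gives $-\tfrac14\int_U\mathrm{Scal}\,\psi\,da$ as you say. The term $-\tfrac12\int_{\partial U}k\,\psi\,ds$ is \emph{not} a corner term: it is the Schl\"afli integrand $\tfrac12\delta H+\tfrac14\langle\delta I,\II_0\rangle$ integrated over the caterpillar $C_\varphi$ (the paper computes this explicitly, equation \eqref{eq:WSchlafliCaterpillar}). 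Finally, the area of the caterpillar is exactly $\int_{\partial U} k(g)\,ds(g)$, so $\mathrm{Area}(C_{e^{2\varphi}g})-\mathrm{Area}(C_g)=\int_{\partial U}\partial_n\varphi\,ds(g)$ by the standard conformal change of geodesic curvature; this is where the $-\tfrac34\int\partial_n\psi$ comes from, not from a displacement of a trace curve on the hemisphere. Once you have the correct three-piece decomposition, the matching is a direct computation with no delicate boundary analysis needed.
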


It should be noted that if $\omega\in {\rm PSL}(2,\mathbb{C})$, then the $B$, $C$ regions for $V=\omega^{-1}U$, and $e^{2\psi}|dz|^2 = \omega^*(e^{2\varphi}|dz|^2)$ will satisfy $$\omega(B(V,\psi)) = B(U\varphi), \ \ \ \text{and} \ \ \ \omega(C(V,\psi)) = C(U,\varphi).$$ In particular we will have $W(U, e^{2\varphi}|dz|^2) = W(V, e^{2\psi}|dz|^2)$. In short, this construction is ${\rm PSL}(2,\mathbb{C})$ invariant.

Planar domains with round circles as boundary are an interesting class to study and enjoy good analytical properties while considered as the boundary of regions in $\mathbb{H}^3$, as done for instance in \cite{PhillipsSarnak85} for the study of the Laplacian and limit sets of Kleinian groups.  

While realizing $\log\det\Delta$ in $\mathbb{H}^3$ in this manner is of independent interest, we provide two applications of this realization.

Maldacena \cite{Maldacena} asked, for a given Riemann surface $\Sigma$, how to minimize $\VR$ on hyperbolic fillings of $\Sigma\cup \Sigma^*$ (i.e. hyperbolic manifolds $M$ so that the boundary at infinite $\partial_\infty M$ coincides with $\Sigma$, $\Sigma^*$ after forgetting markings), where $\Sigma^*$ is the Riemann surface obtained by reversing the orientation of $\Sigma$. Since we can fill in $\Sigma\cup\Sigma^*$ by either taking $M=\Sigma\times\mathbb{R}$ or by taking $M$ to be the union of two disjoint handlebodies, then we are interested in finding which of the following quantities is the smallest:

\begin{enumerate}[label=(\alph*)]
    \item\label{case:QF} $\inf_{M\in QF(\Sigma)} \VR(M)$, where $QF(\Sigma)$ is the set of quasi-Fuchsian manifolds $M=\Sigma\times\mathbb{R}$ whose boundary at infinity is conformal to $\Sigma\cup\Sigma^*$.
    \item\label{case:Schottky} $\inf_{M\in S(\Sigma)} 2\VR(M)$, where $S(\Sigma)$ is the set of Schottky manifold $M$ (namely, a handlebody) with boundary at infinity is conformal to $\Sigma$.
\end{enumerate}

As it is known that in case \ref{case:QF} the smallest volume is $0$ (\cite{BridgemanBrockBromberg}, \cite{FVP17}, \cite{BridgemanBrombergVP}), then Maldacena's question reduces to prove whether we can produce a Schottky manifolds with prescribed conformal boundary and negative renormalized volume. Previous work on this question has been done by the second author in the presence of short geodesics (see \cite{FVP19}). Using a (still open) conjecture of Bers that states that any Riemann surface has a classical Schottky uniformization (one where there exists a fundamental domain at $\overline{\mathbb{C}}$ with round circles as boundary) we prove that an upper bound for case \ref{case:Schottky}, and that upper bound is negative under further assumptions. Namely, we prove the following.

\begin{maintheorem}\label{thm:main1}
Let $M$ be the hyperbolic handlebody obtained by the classical Schottky uniformization of a surface $\Sigma$ of genus $g$. Then $\VR(M) < (6g-8)\pi$. Moreover, if the distance between geodesic planes on the boundary of a fundamental domain for $M$ is at least $4$, then $\VR(M)<-2\pi$.
\end{maintheorem}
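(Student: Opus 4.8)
The plan is to reduce the computation of $\VR(M)$ to the extended $W$-volume of a single fundamental domain and then to play the affine relation of Theorem~\ref{thm:main} against the extremal property of the determinant.

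First I would fix a classical Schottky group $\Gamma$ with fundamental domain $U\subset\overline{\mathbb C}$ bounded by $2g$ round circles paired by the generators, so that $\Sigma=\Omega(\Gamma)/\Gamma$ and $M=\HH^3/\Gamma$. Since the Epstein construction and the regions $B(\varphi),C(\varphi)$ are $\mathrm{PSL}(2,\mathbb C)$-equivariant (as noted after Theorem~\ref{thm:main}), the equidistant foliation defining $\VR(M)$ is $\Gamma$-invariant and its pieces tile across the translates of $U$. I would therefore show that $\VR(M)$ equals the extended $W$-volume $W(e^{2\varphi_h}|dz|^2)$ of $U$ equipped with the lift $e^{2\varphi_h}|dz|^2$ of the hyperbolic metric of $\Sigma$, the contributions along the paired boundary circles cancelling under the identifications. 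This is the step that legitimizes using the circular-boundary machinery of Theorem~\ref{thm:main} for the closed surface $\Sigma$.

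Next I would introduce the reference metric $g_0$ on $U$ obtained by restricting the round metric of $\mathbb S^2=\overline{\mathbb C}$ (equivalently the metric whose Epstein surface over each boundary circle is the totally geodesic plane it spans). For this metric the regions $B(0)$ and $C(0)$ are explicit: $B(0)$ is the region cut out by the $2g$ geodesic hemispheres and $C(0)$ their spherical faces, so $W(g_0)=\operatorname{vol}(B(0))-\tfrac12\int_{\partial B(0)}H\,da-\tfrac34\,\mathrm{Area}(C(0))$ evaluates to a topological term together with volumes attached to the circle configuration. I expect this explicit value to be bounded above by $-2\pi$ plus an interaction term, where the interaction is at most $6(g-1)\pi$ for every configuration and becomes negative once the geodesic planes are pairwise at distance at least $4$; Gauss--Bonnet on $U$ (with $\chi(U)=2-2g$) should account for the coefficient $6(g-1)=-3\chi(U)$, while the distance hypothesis forces the interaction volumes to shrink past the threshold that upgrades the bound to $-2\pi$.

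Finally I would bridge $W(e^{2\varphi_h}|dz|^2)$ to the computable $W(g_0)$ by the affine relation of Theorem~\ref{thm:main},
\[
 W(e^{2\varphi_h}|dz|^2)=W(g_0)+3\pi\bigl(\log\det{}'\Delta(e^{2\varphi_h}|dz|^2)-\log\det{}'\Delta(g_0)\bigr),
\]
and invoke the Osgood--Phillips--Sarnak extremal theorem to control the determinant difference. The delicate point, and the step I expect to be the main obstacle, is to pin down the sign of this difference: on the closed surface the uniformizing metric maximizes the determinant at fixed area, so a naive comparison points the wrong way, and the argument must instead be run on the bordered domain $U$ with the boundary bookkeeping built into Theorem~\ref{thm:main}, where the area normalization and the boundary (geodesic-curvature) terms of the Polyakov formula reverse the inequality into $W(e^{2\varphi_h}|dz|^2)\le W(g_0)$. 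Combining this with the evaluation of $W(g_0)$ would then give $\VR(M)<(6g-8)\pi$ in general and $\VR(M)<-2\pi$ under the distance hypothesis. I anticipate that verifying the direction of the determinant comparison, and simultaneously controlling the $-\tfrac34\,\mathrm{Area}(C)$ contribution in the explicit evaluation, carries essentially all the analytic weight of the proof.
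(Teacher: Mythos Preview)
Your overall architecture matches the paper's: identify $\VR(M)$ with the extended $W$-volume on a round fundamental domain, then use the affine relation of Theorem~\ref{thm:main} together with Osgood--Phillips--Sarnak to compare against a reference metric. Two choices, however, diverge from the paper in ways that actually carry the proof.

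First, your reference metric. You take $g_0$ to be the restricted round metric of $\mathbb S^2$; the paper instead takes $g_0$ to be the hyperbolic metric on $U$ with totally geodesic boundary, i.e.\ the metric invariant under the reflection group $\Gamma_0$ generated by the reflections in the planes over $\partial U$. This is not cosmetic: \cite[Theorem~1]{OsgoodPhillipsSarnak} asserts that among metrics with $\int_{\partial U} k\,ds\ge 0$ the determinant is maximized precisely at this $\Gamma_0$-invariant hyperbolic metric, and the $\Gamma$-invariant hyperbolic metric satisfies $\int_{\partial U} k\,ds=0$ because paired boundary contributions cancel. With the paper's choice the inequality $W(e^{2\varphi_h}|dz|^2)\le W(g_0)$ is therefore immediate and there is no ``wrong-way'' obstacle to finesse. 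With the round metric you would need an extremal property it does not enjoy, and your proposed reversal via boundary bookkeeping has no mechanism behind it. (Your parenthetical identification of the round metric as the one whose Epstein surface over each boundary circle is the totally geodesic plane is also not correct; cf.\ Section~\ref{subsec:Loewner}, where the round-metric Epstein map collapses to a point.)

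Second, the passage from $W(g_0)$ to numerical bounds is not a direct evaluation of the kind you sketch. The paper inserts a further comparison: the convex core $\mathcal C$ of $\Gamma_0$, a finite right-angled polyhedron whose faces are the polygons $P_C\subset P$ in each plane over $C\subset\partial U$ together with the orthogonal right-angled hexagons between triples of planes. Monotonicity of $W$ for non-positively curved metrics gives $W(g_0)<W(\mathcal C)=\operatorname{vol}(\mathcal C)-\tfrac14\sum_e\theta(e)\ell(e)$, the sum running over edges adjacent to two hexagon faces. The bounds then come from three concrete ingredients: Gauss--Bonnet on each $P_C$ yields $\sum_{e\sim P_C}\theta(e)=\operatorname{Area}(P_C)+2\pi$; the hexagon faces assemble into a $(2g)$-holed sphere, so $\operatorname{Area}(\partial\mathcal C)=2\pi(2g-2)+\sum_C\operatorname{Area}(P_C)$; and the hyperbolic isoperimetric inequality gives $\operatorname{vol}(\mathcal C)<\tfrac12\operatorname{Area}(\partial\mathcal C)$. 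These combine to $\operatorname{vol}(\mathcal C)<\sum_e\theta(e)-2\pi$. The general bound follows because there are $6g-6$ such edges with $\theta(e)\le\pi$; the sharpened bound follows because $\ell(e)\ge 4$ forces $\tfrac14\sum_e\theta(e)\ell(e)\ge\sum_e\theta(e)$. Your ``interaction term at most $6(g-1)\pi$'' is the shadow of this edge count, but without the convex-core reduction and the isoperimetric step you have no mechanism producing the $-2\pi$ or tying the distance hypothesis to negativity.
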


The second application (observed to us by Yilin Wang) is that we can use our definition of $W$-volume to give a realization of the Loewner energy in $\mathbb{H}^3$. Loewner energy is a well-studied potential for Weil--Petersson Jordan curves and it is a potential of the Weil-Petersson metric, namely that its Hessian at the round circle equals a multiple of the Weil--Petersson metric. Our result allows us to present a new formulation for this potential if the Jordan curve is sufficiently smooth.

\begin{maintheorem} \label{thm:main2}
Let $\gamma$ be a $C^{2,\alpha}$ Jordan curve, and let $g_{1,2}$ be conformal metrics in $\mathbb{D}, \mathbb{D}^*$ (respectively) given as the pull-back of the round metric of $\mathbb{S}^2$ by uniformizing the complement of $\gamma$. Then the Loewner energy $I^L(\gamma)$ can be computed as

\begin{equation}
I^L(\gamma) = -\frac{4}{\pi} (W(g_1)+W(g_2)).
\end{equation}
In particular, for $C^{2,\alpha}$ Jordan curves one can express a K\"ahler potential for the Weil--Peterson metric in terms of $W$-volume.
\end{maintheorem}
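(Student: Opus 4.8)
The plan is to connect the author's $W$-volume to the Loewner energy through the known variational formula expressing $I^L(\gamma)$ in terms of the determinants of the Laplacian on the two uniformized complementary disks. Recall the Loewner energy admits a description, due to Wang (building on Takhtajan--Teo), as a difference of $\log\det'\Delta$ terms between the conformal metrics $g_1, g_2$ pulled back from the round sphere and the reference (round) data, with the Jordan curve $\gamma$ reducing to the equator when $I^L(\gamma)=0$. The strategy is therefore to rewrite both sides as conformal variations away from the round configuration and to match coefficients using the affine relation \eqref{eq:affinerelation} established in Theorem~A.

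First I would set up the baseline: when $\gamma$ is the round equator, both $g_1$ and $g_2$ are the round hemispherical metrics, the two regions $B(\varphi)$ are the two half-balls meeting along the geodesic plane bounding the equator, and by the $\mathrm{PSL}(2,\mathbb{C})$-invariance noted after Theorem~A one computes $W(g_1)+W(g_2)$ explicitly for this symmetric case; simultaneously $I^L(\text{circle})=0$, so I would check the identity holds (both sides vanish) at the basepoint. Second, I would write a general $C^{2,\alpha}$ curve $\gamma$ via its uniformizing maps, expressing $g_1=e^{2\varphi_1}|dz|^2$ on $\mathbb{D}$ and $g_2=e^{2\varphi_2}|dz|^2$ on $\mathbb{D}^*$ as conformal factors measuring the distortion of the uniformizing maps, so that the conformal deviations $\varphi_i$ encode $\gamma$. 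Third, I would invoke Theorem~A to replace each $W(g_i)-W(g_i^{\mathrm{round}})$ by $3\pi\bigl(\log\det{}'\Delta(g_i)-\log\det{}'\Delta(g_i^{\mathrm{round}})\bigr)$, reducing the right-hand side $-\tfrac{4}{\pi}(W(g_1)+W(g_2))$ to a universal constant times the sum of the two determinant variations.

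The core step is then to match this determinant expression with the Polyakov--Alvarez/Osgood--Phillips--Sarnak anomaly formula and the established $\log\det'\Delta$ formula for $I^L(\gamma)$. Concretely, I would compare the coefficient $-\tfrac{4}{\pi}\cdot 3\pi = -12$ against the normalization appearing in Wang's determinant formula for Loewner energy; the boundary terms along $\gamma$ must cancel because $\gamma$ is a common boundary of $\mathbb{D}$ and $\mathbb{D}^*$ carrying opposite orientations, so the geodesic-curvature contributions from the two Polyakov--Alvarez expansions add with opposite signs and drop out, leaving exactly the curvature/area integrals that constitute $I^L(\gamma)$. The hypothesis $\gamma\in C^{2,\alpha}$ is precisely what guarantees the Epstein map and the regions $B(\varphi_i)$, $C(\varphi_i)$ are well-defined with the regularity needed for Theorem~A to apply and for the boundary integrals to converge.

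The main obstacle I anticipate is the careful bookkeeping of the boundary contributions along $\gamma$ in the Polyakov--Alvarez formula: the determinant anomaly for a surface with boundary includes geodesic-curvature and corner terms, and one must verify that when the two complementary pieces are glued along the common curve these terms cancel \emph{exactly} rather than up to a constant. Getting this cancellation right — and confirming that the residual constant matches the vanishing of $I^L$ at the round circle established in the first step — is where the proof lives; the rest is an application of Theorem~A and a normalization check against Wang's definition of Loewner energy.
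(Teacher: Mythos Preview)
Your proposal follows the same route as the paper: invoke Wang's determinant formula for $I^L(\gamma)$, apply Theorem~A's affine relation on each of $\mathbb{D}$ and $\mathbb{D}^*$, and verify the baseline vanishes. The paper carries this out in a few lines; the only substantive computation is that for the round metric $g_{\mathbb{S}^2}|_{\mathbb{D}}$ the Epstein map is constant (equal to $(0,0,1)$), so the entire piecewise sphere $S_\varphi$ degenerates and $W(g_{\mathbb{S}^2}|_{\mathbb{D}})=W(g_{\mathbb{S}^2}|_{\mathbb{D}^*})=0$ immediately.

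Your ``main obstacle'' is a misconception that should be excised. The $W$-volumes $W(g_1), W(g_2)$ and the determinants $\log\det\Delta(g_i)$ are computed on the fixed domains $\mathbb{D}$ and $\mathbb{D}^*$, whose common boundary is the \emph{unit circle} $\mathbb{S}^1$, not the curve $\gamma$; the curve $\gamma$ never appears as a boundary in any of the formulas once you pull back by the uniformizing maps. There is no gluing along $\gamma$ and no cancellation of Polyakov--Alvarez boundary terms between the two pieces to arrange. Wang's formula already packages $I^L(\gamma)$ as a sum of four determinants on $\mathbb{D},\mathbb{D}^*$ with their round boundaries, and Theorem~A applies to each term separately; the boundary integrals along $\mathbb{S}^1$ are absorbed into the affine relation itself and need no further treatment. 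Once you drop that detour, your outline is exactly the paper's proof.
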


\subsection*{Outline} In Section \ref{sec:Epsteinsurfaces} we discuss our extension of Epstein maps.  In Subsection \ref{subsecGeneralEps} we describe the construction in general dimension, while in Subsection \ref{subsec:2dEps} we carry out explicit computations in $\mathbb{H}^3$. Then in Subsection \ref{subsec:W-vol} we define $W$-volume for conformal metrics in domains with round boundaries and proves Theorem \ref{thm:main}. Section \ref{sec:applications} contains our applications. In Subsection \ref{subsec:negVR} we prove Theorem \ref{thm:main1}, while in Subsection \ref{subsec:Loewner} we prove Theorem \ref{thm:main2}.

\section*{Acknowledgements}
We would like to thank Juan Maldacena for bringing this problem to our attention, as well as for our much appreciated discussions about the topic. We are also thankful to Keaton Quinn, Peter Sarnak and Yilin Wang for helpful discussions and comments, as well as to Ken Bromberg for pointing out a gap on an earlier version of Theorem \ref{thm:main1}.

\section{Epstein maps}\label{sec:Epsteinsurfaces}

\subsection{Epstein maps for general submanifolds}\label{subsecGeneralEps}

Let $M^k \subset \mathbb{R}^{n}$ be a k-dimensional submanifold and let $\varphi: M\rightarrow \RR$ be a smooth function. For any $p\in M$ we can consider the horosphere $H_{p, \varphi(p)}$ tangent to $\partial_{\infty}\HH^{n+1} = \mathbb{R}^{n}$ at $p$ and that has signed distance from $(0,\ldots,0,1)$ equal to $\varphi(p)$. Hence $H(p,\varphi)$ is parametrized by 

\[H(p,\varphi) = \big\lbrace (p,e^{-\varphi(p)}) + e^{-\varphi(p)}Y \,\big|\, Y\in \mathbb{S}^n \big\rbrace,
\]
where the point at infinity corresponde to $Y=(0,\ldots,0,-1)$. This horosphere can be found as the solution of points $P\in\mathbb{H}^{n+1}$ so that if $\eta$ is an isometry of $\mathbb{H}^n$ satisfying $\eta(P)=(0,\ldots,0,1)$ then $\eta^*(g_{\mathbb{S}^{n}})(p,0) = e^{\phi(p)}g_{\mathbb{S}^{n}}(p,0)$, where $g_{\mathbb{S}^n}$ denotes the round metric in $\mathbb{S}^n=\RR^{n}\cup\lbrace\infty\rbrace$. Hence we think of $e^\phi$ as a conformal factor with respect to the spherical metric. 

By $\Eps_{\varphi,M}$ we denote the \emph{envelope} of those horospheres, also known as the \emph{Epstein map} associated to $\varphi, M$. More precisely, we will define $\Eps_{\varphi,M}:T^\perp M \rightarrow \HH^{n+1}$ (where $T^\perp$ denotes the normal bundle of $M$) as the smooth map given by the \emph{envelope restrictions}

\begin{enumerate}
    \item\label{item:tangency} $D_{(p,x)}\Eps_{\varphi,M}$ has image contained in $T_{\Eps_{\varphi,M}(p,x)}H_{p,\varphi(p)}$
    \item\label{item:horosphere} $\Eps_{\varphi,M}(T^\perp_pM)$ parametrizes a $(n-k)$-dimensional horosphere in $H_{p,\varphi(p)}$
\end{enumerate}
For the cases where the submanifold $M$ is understood, we will drop it from the notation of $\Eps$. We also define the Gauss map $\widetilde{\Eps}_{\varphi}:T^\perp M \rightarrow T^1\HH^{n+1}$ as the outer-normal to $H_{p,\varphi(p)}$ at $\Eps_{\varphi}(p,x)$. While in general $\Eps_\varphi$ might not be an immersion, we will see that $\widetilde{\Eps}$ is always an embedding.

Following \cite{Epstein84} we need then to solve for a function $Y:T^\perp M \rightarrow \mathbb{S}^n$ so that 
\begin{equation}\label{eq:Epsteinmap}
\Eps(p,x) = (p,e^{-\varphi(p)}) + e^{-\varphi(p)}Y(p,x)
\end{equation}
solves (\ref{item:tangency}) and (\ref{item:horosphere}). As done by Epstein, we will show that $Y$ solves a particular linear system of equations.

Consider $v_1, \ldots, v_k$ an orthonormal basis of $T_pM$ and $e_1, \ldots, e_{n-k}$ an orthonormal basis of $T^\perp_p M$. Then the tangent space of $\Eps$ is generated by

\begin{equation}\label{eq:envelopedirections}
\Eps_{v_i} = (v_i, -\varphi_{v_i}e^{-\varphi(p)}) - \varphi_{v_i}e^{-\varphi(p)}Y + e^{-\varphi(p)} Y_{v_i}, \quad i=1, \ldots, k
\end{equation}

\begin{equation}\label{eq:horospheredirections}
\Eps_{e_l} = e^{-\varphi(p)}Y_{e_l}, \quad l=1, \ldots, n-k
\end{equation}

Similarly, the tangent space to the horosphere $H(p,\alpha)$ is given by the orthogonal complement of $Y\in \mathbb{S}^n \subset \RR^{n+1}$. $\Eps$ is then the solution to the equations defined by making the tangent spaces to coincide. Hence multiplying (\ref{eq:envelopedirections}), (\ref{eq:horospheredirections}) by Y the previous equations and using $Y.Y=1, Y.Y_{v_i}= Y.Y_{e_\ell} = 0,\,i=1,\ldots,k, l=1, \ldots, n-k$ we obtain the linear system

\begin{equation}\label{eq:linearsystem}
\langle (v_i, -\varphi_{v_i}e^{-\varphi(p)}) , Y \rangle = \varphi_{v_i}e^{-\varphi(p)},\quad i=1,\ldots, k
\end{equation}
since the equations from (\ref{eq:horospheredirections}) are always satisfied.

Denote by $Z$ the space of solutions $Y$ of the linear system of equations $\langle (v_i, -\varphi_{v_i}e^{-\varphi(p)}) , Y \rangle =0$ for $i=1,\ldots, k$. Since the entries $v_i$ are orthonormal, we have that $Z$ is $(n-k+1)$-dimensional. Moreover, we have that $(e_1,0),\ldots, (e_{n-k},0), (e^{-\varphi}\nabla_p\varphi,1)$ is a basis of $Z$, where $\nabla_p$ is the gradient of $\phi$ with respect to the metric in $M$ induced by $\RR^n$

One can easily verify, $Y=(0,\ldots,0,-1)$ is a solution to the system \ref{eq:linearsystem}. We can see this geometrically by recalling that the plane $\mathbb{R}^n\times\lbrace0\rbrace = \partial_{\infty}\HH^{n+1}$ is tangent to any horosphere based at $M$. We can then characterize the space of solutions of (\ref{eq:linearsystem}) by the points of norm $1$ in $Z+(0,\ldots,0,-1)$. Geometrically, we are translating the kernel $Z$ so it passes through $(0,\ldots,0,-1)$ and we are considering its intersection with the unit sphere $\mathbb{S}^n$. This intersection (denoted by say $A(p)$) is a round $k$-dimensional sphere in $\mathbb{S}^n$, and we are interested in all the points of $A(p)$ distinct to $(0,\ldots,0,1)$. We wish to parametrize $A(p)\setminus\lbrace p\rbrace$ by $T^\perp_pM$ in such a way that $Y(\alpha(p,x))$ varies continuously on $(p,x)$, to then define $\Eps$ by the formula (\ref{eq:Epsteinmap}). We will use \cite{Epstein84} in the following way to achieve our goal. In \cite{Epstein84}, Epstein formula for $Y$ depends on the 1-jet of a conformal metric, meaning that the formula is found in terms of the values of $\phi$ and $\nabla\phi$ at a point (where $\nabla$ denotes the gradient with respect to the euclidean metric of $\RR^n$), and it is found by solving a linear system of equations similar to \eqref{eq:envelopedirections}. For the upper-half space model this solution is given by 

\begin{equation}\label{eq:EpsteinOGSolution}
    Y = \left(\frac{2e^{-\phi}\nabla\phi}{1+e^{-2\phi}|\nabla \phi|^2}, \frac{1-e^{-2\phi}|\nabla\phi|^2}{1+e^{-2\phi}|\nabla \phi|^2} \right)
\end{equation}

At any point $p\in M$, we know the value of $\phi$ and we know the projection of $\nabla\phi$ to $T_p M$ as $\nabla_p\phi$. Hence we define $\nabla\phi$ to $\mathbb{R}^n$ so that its projection to $T^\perp_p M$ is equal to $x\in T^\perp_pM$. The solution from \cite{Epstein84} belong to $A(p)\setminus\lbrace p\rbrace$, as it is the solution of a system of equations that contains \eqref{eq:envelopedirections}, and it is clear that expand all points in the punctured sphere $A(p)\setminus\lbrace p\rbrace$.

Observe that every vertical plane $T^\perp_pM$ in $\Eps: T^\perp M \rightarrow \HH^{n+1}$ is mapped isometrically into the $k$-dimensional horosphere $A(p)$ tangent at $p$. The $k$-dimensional horosphere $A(p)=\Eps (T^\perp_p M)$ lies inside the horosphere $H(p,\varphi(p))$. Hence our definition of $\Eps$ satisfies (\ref{item:tangency}) and (\ref{item:horosphere}).

As announced, we can also define $\widetilde{\Eps}_{\varphi}:T^\perp M \rightarrow T^1\HH^{n+1}$ as the outer-normal to $H_{p,\varphi(p)}$ at $\Eps_{\varphi}(p,x)$. As the (negative) endpoint of $\widetilde{\Eps}_{\varphi}$ is a left inverse of $\widetilde{\Eps}_{\varphi}$, we have that the map $\widetilde{\Eps}_{\varphi}$ is an embedding.

\subsection{Epstein surfaces for Jordan curves in the euclidean plane}\label{subsec:2dEps}

We will focus our attention in the case $n=2, k=1$. In particular $M$ is topologically a circle or an interval, and it has trivial normal bundle.

Take then a parametrization $\gamma(s)$ of $M$ by arc-length, so that $\gamma'(s)$ and $i\gamma'(s)$ are unit vectors that generate $T_{\gamma(s)}M$ and $T^\perp_{\gamma(s)} M$, respectively. Parametrize as well the function $\varphi$ on the arc-length variable $s$.

In this parametrization, the kernel $Z$ associated to the linear system (\ref{eq:linearsystem}) is generated by $(i\gamma'(s),0)$ and $(\varphi'(s)e^{-\varphi(s)}\gamma'(s),1)$. Hence we are looking for solutions $u,v$ so that $Y= u(i\gamma'(s),0) + v(\varphi'(s)e^{-\varphi(s)}\gamma'(s),1) + (0,0,-1) = (v\varphi'(s)e^{-\varphi(s)}\gamma'(s) + iu\gamma'(s), v-1)$ in the unit sphere. This reduces to

\begin{align*}
    &(v\varphi'e^{-\varphi})^2 + u^2 + (v-1)^2=1\\
    &v\left(v|\varphi'|^2e^{-2\varphi} + v\left(\frac{u}{v}\right)^2 + v-2\right)=0\\
    &v = \frac{2}{1+|\varphi'|^2e^{-2\varphi} + (u/v)^2}
\end{align*}

Defining the variable $t= \left(\frac{u}{v}\right)e^{\varphi}$ we have that

\begin{align*}
    v&= \frac{2}{1+(|\varphi'|^2+t^2)e^{-2\varphi}} = \frac{2e^{2\varphi}}{e^{2\varphi}+|\varphi'|^2+t^2}\\
    u&= \frac{2te^{-\varphi}}{1+(|\varphi'|^2+t^2)e^{-2\varphi}} = \frac{2te^{\varphi}}{e^{2\varphi}+|\varphi'|^2+t^2}
\end{align*}
and $Y=Y(s,t)$ can be parametrize as

\begin{equation}
    Y = \left( \frac{2\varphi'e^{\varphi}}{e^{2\varphi}+|\varphi'|^2+t^2}\gamma'(s)+ \frac{2te^{\varphi}}{e^{2\varphi}+|\varphi'|^2+t^2}i\gamma'(s), \frac{e^{2\varphi}-|\varphi'|^2-t^2}{e^{2\varphi}+|\varphi'|^2+t^2}\right)
\end{equation}

Consequently, the Epstein map is given by

\begin{equation}\label{eq:Epsteintube}
    \begin{split}
    \Eps(s,t) &= (\gamma(s),e^{-\varphi(s)}) + e^{-\varphi(s)}Y(s,t) \\&= (\gamma(s),0) + \left( \gamma'(s)\frac{2\varphi'}{e^{2\varphi}+|\varphi'|^2+t^2}+ i\gamma'(s)\frac{2t}{e^{2\varphi}+|\varphi'|^2+t^2},  \frac{2e^{\varphi}}{e^{2\varphi}+|\varphi'|^2+t^2}\right)
    \end{split}
\end{equation}

We refer to the parametrized surface $\Eps$ (or portions of it) colloquially as \emph{caterpillar regions}, as its decomposition into horocycles along $M$ resembles the circular pattern of a caterpillar.

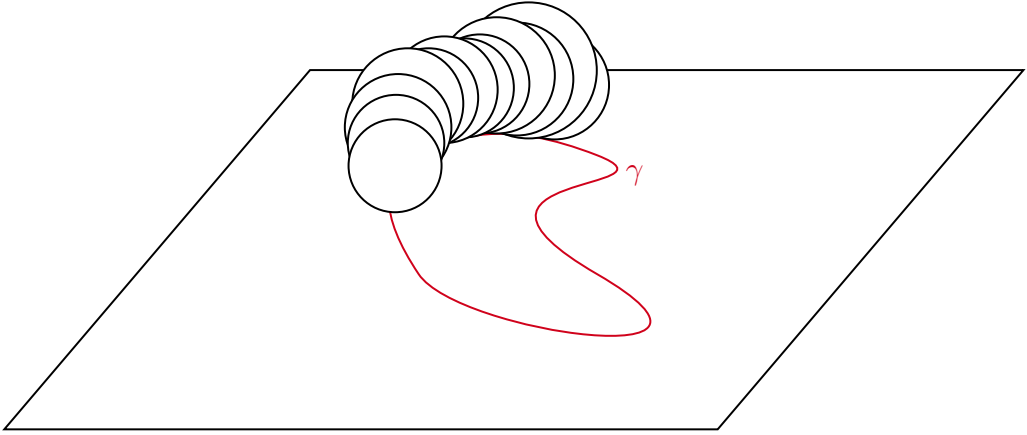
\begin{figure}[hbt!]
    \centering

\tikzset{every picture/.style={line width=0.75pt}} %set default line width to 0.75pt        

\begin{tikzpicture}[x=0.75pt,y=0.75pt,yscale=-1,xscale=1]
%uncomment if require: \path (0,310); %set diagram left start at 0, and has height of 310

%Shape: Polygon Curved [id:ds39414427127729335] 
\draw  [color={rgb, 255:red, 208; green, 2; blue, 27 }  ,draw opacity=1 ] (272,128) .. controls (292,118) and (315,109.67) .. (362,128) .. controls (409,146.33) and (273,136.33) .. (362,188) .. controls (451,239.67) and (292,218) .. (272,188) .. controls (252,158) and (252,138) .. (272,128) -- cycle ;
%Shape: Parallelogram [id:dp1188273877263295] 
\draw   (217.2,85) -- (577,85) -- (422.8,266.33) -- (63,266.33) -- cycle ;
%Shape: Circle [id:dp7351400938274586] 
\draw  [fill={rgb, 255:red, 255; green, 255; blue, 255 }  ,fill opacity=1 ] (313,92.5) .. controls (313,77.31) and (325.31,65) .. (340.5,65) .. controls (355.69,65) and (368,77.31) .. (368,92.5) .. controls (368,107.69) and (355.69,120) .. (340.5,120) .. controls (325.31,120) and (313,107.69) .. (313,92.5) -- cycle ;
%Shape: Circle [id:dp5468795515357792] 
\draw  [fill={rgb, 255:red, 255; green, 255; blue, 255 }  ,fill opacity=1 ] (293.17,85.17) .. controls (293.17,66.2) and (308.54,50.83) .. (327.5,50.83) .. controls (346.46,50.83) and (361.83,66.2) .. (361.83,85.17) .. controls (361.83,104.13) and (346.46,119.5) .. (327.5,119.5) .. controls (308.54,119.5) and (293.17,104.13) .. (293.17,85.17) -- cycle ;
%Shape: Circle [id:dp12379410095042043] 
\draw  [fill={rgb, 255:red, 255; green, 255; blue, 255 }  ,fill opacity=1 ] (293,89.5) .. controls (293,73.76) and (305.76,61) .. (321.5,61) .. controls (337.24,61) and (350,73.76) .. (350,89.5) .. controls (350,105.24) and (337.24,118) .. (321.5,118) .. controls (305.76,118) and (293,105.24) .. (293,89.5) -- cycle ;
%Shape: Circle [id:dp686315326024908] 
\draw  [fill={rgb, 255:red, 255; green, 255; blue, 255 }  ,fill opacity=1 ] (282,87.67) .. controls (282,71.47) and (295.13,58.33) .. (311.33,58.33) .. controls (327.53,58.33) and (340.67,71.47) .. (340.67,87.67) .. controls (340.67,103.87) and (327.53,117) .. (311.33,117) .. controls (295.13,117) and (282,103.87) .. (282,87.67) -- cycle ;
%Shape: Circle [id:dp12401755887072818] 
\draw  [fill={rgb, 255:red, 255; green, 255; blue, 255 }  ,fill opacity=1 ] (277.83,92) .. controls (277.83,78.19) and (289.03,67) .. (302.83,67) .. controls (316.64,67) and (327.83,78.19) .. (327.83,92) .. controls (327.83,105.81) and (316.64,117) .. (302.83,117) .. controls (289.03,117) and (277.83,105.81) .. (277.83,92) -- cycle ;
%Shape: Circle [id:dp27415298956380796] 
\draw  [fill={rgb, 255:red, 255; green, 255; blue, 255 }  ,fill opacity=1 ] (270,94) .. controls (270,80.19) and (281.19,69) .. (295,69) .. controls (308.81,69) and (320,80.19) .. (320,94) .. controls (320,107.81) and (308.81,119) .. (295,119) .. controls (281.19,119) and (270,107.81) .. (270,94) -- cycle ;
%Shape: Circle [id:dp8670052906479482] 
\draw  [fill={rgb, 255:red, 255; green, 255; blue, 255 }  ,fill opacity=1 ] (258,94.92) .. controls (258,80.05) and (270.05,68) .. (284.92,68) .. controls (299.78,68) and (311.83,80.05) .. (311.83,94.92) .. controls (311.83,109.78) and (299.78,121.83) .. (284.92,121.83) .. controls (270.05,121.83) and (258,109.78) .. (258,94.92) -- cycle ;
%Shape: Circle [id:dp7475998941402995] 
\draw  [fill={rgb, 255:red, 255; green, 255; blue, 255 }  ,fill opacity=1 ] (252,99) .. controls (252,85.19) and (263.19,74) .. (277,74) .. controls (290.81,74) and (302,85.19) .. (302,99) .. controls (302,112.81) and (290.81,124) .. (277,124) .. controls (263.19,124) and (252,112.81) .. (252,99) -- cycle ;
%Shape: Circle [id:dp18341441716301454] 
\draw  [fill={rgb, 255:red, 255; green, 255; blue, 255 }  ,fill opacity=1 ] (238.5,102) .. controls (238.5,86.54) and (251.04,74) .. (266.5,74) .. controls (281.96,74) and (294.5,86.54) .. (294.5,102) .. controls (294.5,117.46) and (281.96,130) .. (266.5,130) .. controls (251.04,130) and (238.5,117.46) .. (238.5,102) -- cycle ;
%Shape: Circle [id:dp6941727549484338] 
\draw  [fill={rgb, 255:red, 255; green, 255; blue, 255 }  ,fill opacity=1 ] (234.75,113.88) .. controls (234.75,99.03) and (246.78,87) .. (261.63,87) .. controls (276.47,87) and (288.5,99.03) .. (288.5,113.88) .. controls (288.5,128.72) and (276.47,140.75) .. (261.63,140.75) .. controls (246.78,140.75) and (234.75,128.72) .. (234.75,113.88) -- cycle ;

%Shape: Circle [id:dp06413217971101037] 
\draw  [fill={rgb, 255:red, 255; green, 255; blue, 255 }  ,fill opacity=1 ] (236.25,121.83) .. controls (236.25,108.39) and (247.14,97.5) .. (260.58,97.5) .. controls (274.02,97.5) and (284.92,108.39) .. (284.92,121.83) .. controls (284.92,135.27) and (274.02,146.17) .. (260.58,146.17) .. controls (247.14,146.17) and (236.25,135.27) .. (236.25,121.83) -- cycle ;
%Shape: Circle [id:dp14274699355972453] 
\draw  [fill={rgb, 255:red, 255; green, 255; blue, 255 }  ,fill opacity=1 ] (236.63,133.29) .. controls (236.63,120.34) and (247.13,109.83) .. (260.08,109.83) .. controls (273.04,109.83) and (283.54,120.34) .. (283.54,133.29) .. controls (283.54,146.25) and (273.04,156.75) .. (260.08,156.75) .. controls (247.13,156.75) and (236.63,146.25) .. (236.63,133.29) -- cycle ;

% Text Node
\draw (375,131.4) node [anchor=north west][inner sep=0.75pt]  [color={rgb, 255:red, 208; green, 2; blue, 27 }  ,opacity=1 ]  {$\gamma $};

\end{tikzpicture}

    \caption{Representation of finitely many segments of a caterpillar region over the curve $\gamma$}
    \label{fig:caterpillar}
\end{figure}\noindent

In order to find the normal vector $\widetilde{\Eps}=N$ to the horosphere, we can observe that $N$ is parallel to the vector $Y$. Taking then $N$ so that it is a unit vector in $\mathbb{H}^3$ we obtain

\begin{equation}\label{eq:normalvector}
N(s,t) = \left( \gamma'(s)\frac{4\varphi'e^{\varphi}}{(e^{2\varphi}+|\varphi'|^2+t^2)^2}+ i\gamma'(s)\frac{4te^{\varphi}}{(e^{2\varphi}+|\varphi'|^2+t^2)^2},  \frac{4e^{2\varphi}}{(e^{2\varphi}+|\varphi'|^2+t^2)^2}\right)
\end{equation}

In order to calculate the first and second fundamental forms associated to $\Eps$, we consider the coordinate tangent vector fields $S= \partial_s\Eps,\, T=\partial_t\Eps$. These vector fields are given by the explicit yet laborious formulas

\begin{equation}\label{eq:Tvector}
\begin{split}
    T &= \bigg( \gamma'(s)\frac{-4t\varphi'}{(e^{2\varphi}+|\varphi'|^2+t^2)^2}+ i\gamma'(s)\left(\frac{2}{e^{2\varphi}+|\varphi'|^2+t^2} + \frac{-4t^2}{(e^{2\varphi}+|\varphi'|^2+t^2)^2}\right),\\&  \frac{-4te^{\varphi}}{e^{2\varphi}+|\varphi'|^2+t^2}\bigg)
    \\& = \bigg(\gamma'(s)\frac{-4t\varphi'}{(e^{2\varphi}\gamma'+|\varphi'|^2+t^2)^2} +  i\gamma'(s)\frac{2(e^{2\varphi} + |\varphi'|^2-t^2)}{(e^{2\varphi}+|\varphi'|^2+t^2)^2}, \frac{-4te^{\varphi}}{(e^{2\varphi}+|\varphi'|^2+t^2)^2}\bigg)
\end{split}
\end{equation}

Since $\gamma(s)$ is parametrized by arc-length, we have that $\gamma''(s)=k(s)i\gamma'(s)$, where $k(s)$ is the curvature at $\gamma(s)$.

\begin{equation}\label{eq:Svector}
\begin{split}
    S &= (\gamma'(s),0) + \bigg( \gamma'(s) \frac{2\varphi''(e^{2\varphi}+|\varphi'|^2+t^2) - 2\varphi'(2\varphi'e^{2\varphi} + 2\varphi'\varphi'')}{(e^{2\varphi}+|\varphi'|^2+t^2)^2} + ik\gamma'(s) \frac{2\varphi'}{e^{2\varphi}+|\varphi'|^2+t^2}
    \\& + i\gamma'(s) \frac{-2t(2\varphi'e^{2\varphi}+2\varphi'\varphi'')}{(e^{2\varphi}+|\varphi'|^2+t^2)^2} -k\gamma'(s)\frac{2t}{e^{2\varphi}+|\varphi'|^2+t^2},
    \\& \frac{2\varphi'e^{\varphi}(e^{2\varphi}+|\varphi'|^2+t^2) - 2e^{\varphi}(2\varphi'e^{2\varphi}+2\varphi'\varphi'')}{(e^{2\varphi}+|\varphi'|^2+t^2)^2} \bigg)\\
    &= \bigg(\gamma'(s)\bigg(\frac{2 (e^{2\varphi} + t^2) \varphi'' - 2 |\varphi'|^2 (\varphi'' + 2 e^{2\varphi})}{(e^{2\varphi}+|\varphi'|^2+t^2)^2} - \frac{2t\kappa}{e^{2\varphi} + |\varphi'|^2+t^2} + 1\bigg)
    \\&+ i\gamma'(s)\bigg(\frac{-4t\varphi'(\varphi''+e^{2\varphi})}{(e^{2\varphi}+|\varphi'|^2+t^2)^2}  + \frac{2\varphi'\kappa}{e^{2\varphi} + |\varphi'|^2+t^2} \bigg),
    \\&\frac{-2\varphi'e^{\varphi}(2\varphi''-|\varphi'|^2+e^{2\varphi} -t^2)}{(e^{2\varphi}+|\varphi'|^2+t^2)^2}\bigg)
\end{split}
\end{equation}

Similarly, we differentiate the normal vector from (\ref{eq:normalvector}) with respect to $t$ and $s$ to obtain

\begin{equation}\label{eq:partialTN}
    \partial_tN=\bigg(\gamma'(s)\frac{-16t\varphi'e^{2\varphi}}{(e^{2\varphi}+|\varphi'|^2+t^2)^3}
    +i\gamma'(s)\frac{4e^{2\varphi}(e^{2\varphi}+|\varphi'|^2-3t^2)}{(e^{2\varphi}+|\varphi'|^2+t^2)^3}
    ,\frac{-4te^{\varphi}(3e^{2\varphi}-(|\varphi'|^2+t^2))}{(e^{2\varphi}+|\varphi'|^2+t^2)^3}\bigg)
\end{equation}

\begin{equation}\label{eq:partialSN}
\begin{split}
    \partial_sN=&\bigg(\gamma'\bigg(\frac{-4e^{2\varphi}[-\varphi''(e^{2\varphi}+t^2) -2|\varphi'|^4 +|\varphi'|^2(3\varphi'' +2e^{2\varphi} -2t^2) ]}{(e^{2\varphi}+|\varphi'|^2+t^2)^3} - \frac{4t\kappa e^{2\varphi}}{(e^{2\varphi}+|\varphi'|^2+t^2)^2}\bigg)
    \\&+i\gamma'\bigg(\frac{-8t\varphi'e^{2\varphi}(2 \varphi'' - |\varphi'|^2 + e^{2 \varphi} - t^2)}{(e^{2\varphi}+|\varphi'|^2+t^2)^3}  + \frac{4\varphi'\kappa e^{2\varphi}}{(e^{2\varphi}+|\varphi'|^2+t^2)^2}\bigg),
    \\&\frac{-2 e^{\varphi} \varphi' ((6 e^{2\varphi} - 2 t^2) \varphi'' + |\varphi'|^4 - 2 |\varphi'|^2 (\varphi'' + 3 e^{2\varphi} - t^2) - 6 t^2 e^{2\varphi} + e^{4\varphi} + t^4)}{(e^{2\varphi}+|\varphi'|^2+t^2)^3}\bigg)
\end{split}
\end{equation}

Recalling that the only non-vanishing Christoffel symbols are given by $\Gamma^i_{3i} = \Gamma^i_{i3}=\frac{-1}{z}, \Gamma^3_{ii}=\frac{1}{z}, \Gamma^3_{33}=-\frac1z$, we use (\ref{eq:partialTN}), (\ref{eq:partialSN}) to calculate the following covariant derivatives

\begin{equation}\label{eq:TCovariantN}
\begin{split}
    \nabla_T N & = \partial_t N + (\Gamma^k_{ij}N^iT^j)e_k
    \\&=\bigg(\gamma'\bigg( \frac{-16t\varphi'e^{2\varphi}}{(e^{2\varphi}+|\varphi'|^2+t^2)^3} + \frac{8t\varphi'e^{2\varphi}}{(e^{2\varphi}+|\varphi'|^2+t^2)^3} + \frac{4t\varphi'(e^{2\varphi}-(|\varphi'|^2+t^2))}{(e^{2\varphi}+|\varphi'|^2+t^2)^3} \bigg) 
    \\&i\gamma'\bigg(\frac{4e^{2\varphi}(e^{2\varphi}+|\varphi'|^2-3t^2)}{(e^{2\varphi}+|\varphi'|^2+t^2)^3} + \frac{8t^2e^{2\varphi}}{(e^{2\varphi}+|\varphi'|^2+t^2)^3} - \frac{2(e^{2\varphi} + |\varphi'|^2-t^2)(e^{2\varphi}-(|\varphi'|^2+t^2))}{(e^{2\varphi}+|\varphi'|^2+t^2)^3} \bigg),
    \\&\frac{-4te^{\varphi}(3e^{2\varphi}-(|\varphi'|^2+t^2))}{(e^{2\varphi}+|\varphi'|^2+t^2)^3} + \frac{8te^{2\varphi}(e^{2\varphi}-(|\varphi'|^2+t^2))}{(e^{2\varphi}+|\varphi'|^2+t^2)^3} \bigg)
    \\&=\bigg(\gamma'\frac{-4t\varphi'}{(e^{2\varphi}+|\varphi'|^2+t^2)^2} + i\gamma' \frac{2(e^{2\varphi} + |\varphi'|^2-t^2)}{(e^{2\varphi}+|\varphi'|^2+t^2)^2}, \frac{-4te^{\varphi}}{(e^{2\varphi}+|\varphi'|^2+t^2)^2}\bigg)
    \\&=T
\end{split}
\end{equation}

\begin{align*}
    \nabla_S N & = \partial_s N + (\Gamma^k_{ij}N^iS^j)e_k
    \\&= \bigg(\gamma'\bigg(\frac{-4e^{2\varphi}[-\varphi''(e^{2\varphi}+t^2) -2|\varphi'|^4 +|\varphi'|^2(3\varphi'' +2e^{2\varphi} -2t^2) ]}{(e^{2\varphi}+|\varphi'|^2+t^2)^3} - \frac{4t\kappa e^{2\varphi}}{(e^{2\varphi}+|\varphi'|^2+t^2)^2}
    \\&+\frac{4|\varphi'|^2e^{2\varphi}(2\varphi''-|\varphi'|^2+e^{2\varphi}-t^2)}{(e^{2\varphi}+|\varphi'|^2+t^2)^3}
    \\&+\bigg(\frac{2 (e^{2\varphi} + t^2) \varphi'' - 2 |\varphi'|^2 (\varphi'' + 2 e^{2\varphi})}{(e^{2\varphi}+|\varphi'|^2+t^2)^2} - \frac{2t\kappa}{e^{2\varphi} + |\varphi'|^2+t^2} + 1\bigg)\frac{-(e^{2\varphi}-(|\varphi'|^2+t^2))}{e^{2\varphi}+|\varphi'|^2+t^2}\bigg)
    \\&+i\gamma'\bigg(\frac{-8t\varphi'e^{2\varphi}(2 \varphi'' - |\varphi'|^2 + e^{2 \varphi} - t^2)}{(e^{2\varphi}+|\varphi'|^2+t^2)^3}  + \frac{4\varphi'\kappa e^{2\varphi}}{(e^{2\varphi}+|\varphi'|^2+t^2)^2}
    \\&\frac{4t\varphi'e^{2\varphi}(2\varphi''-|\varphi'|^2+e^{2\varphi}-t^2)}{(e^{2\varphi}+|\varphi'|^2+t^2)^3}
    \\&\bigg(\frac{-4t\varphi'(\varphi''+e^{2\varphi})}{(e^{2\varphi}+|\varphi'|^2+t^2)^2}  + \frac{2\varphi'\kappa}{e^{2\varphi} + |\varphi'|^2+t^2} \bigg)\frac{-(e^{2\varphi}-(|\varphi'|^2+t^2))}{e^{2\varphi}+|\varphi'|^2+t^2}\bigg),
    \\&\frac{-2 e^{\varphi} \varphi' ((6 e^{2\varphi} - 2 t^2) \varphi'' + |\varphi'|^4 - 2 |\varphi'|^2 (\varphi'' + 3 e^{2\varphi} - t^2) - 6 t^2 e^{2\varphi} + e^{4\varphi} + t^4)}{(e^{2\varphi}+|\varphi'|^2+t^2)^3}
    \\&+ \frac{4\varphi'e^{\varphi}(e^{2\varphi}-(|\varphi'|^2+t^2))(2\varphi''-|\varphi'|^2+e^{2\varphi}-t^2)}{((e^{2\varphi}+|\varphi'|^2+t^2)^3)} \bigg)
\end{align*}
which reduces to

\begin{equation}\label{eq:SCovariantN}
\begin{split}
    \nabla_S N&=\bigg(\gamma'\bigg(\frac{2\varphi''(e^{2\varphi}-|\varphi'|^2+t^2)}{(e^{2\varphi}+|\varphi'|^2+t^2)^2} - \frac{2t\kappa}{(e^{2\varphi}+|\varphi'|^2+t^2)} - \frac{(e^{2\varphi}-(|\varphi'|^2+t^2))}{(e^{2\varphi}+|\varphi'|^2+t^2)}  \bigg)
    \\& i\gamma'\bigg( -\frac{4t\varphi'\varphi''}{(e^{2\varphi}+|\varphi'|^2+t^2)^2} + \frac{2\varphi'\kappa}{(e^{2\varphi}+|\varphi'|^2+t^2)} \bigg),
    \\&-\frac{4\varphi'\varphi''e^{\varphi}}{(e^{2\varphi}+|\varphi'|^2+t^2)^2} +\frac{2\varphi'e^\varphi}{(e^{2\varphi}+|\varphi'|^2+t^2)} \bigg)
\end{split}
\end{equation}

We proceed to calculate the first fundamental form of the caterpillar region in $(s,t)$ coordinates, by taking inner products of (\ref{eq:Tvector}) and (\ref{eq:Svector})

\begin{equation}\label{eq:1stfundform}
\begin{split}
    I(T,T) = \langle T,T\rangle &=e^{-2\varphi}\\
    I(T,S) = \langle T,S\rangle &=(k-t)\varphi'e^{-2\varphi}\\
    I(S,S) = \langle S,S\rangle &=\frac{e^{-2\varphi}}{4}\bigg(4|\varphi'|^2(k-t)^2 + (e^{2\varphi} + t(-2k+t) - |\varphi'|^2 + 2\varphi'')^2 \bigg)
\end{split}
\end{equation}

For the second fundamental form $\II$ we take inner products of (\ref{eq:Tvector}), (\ref{eq:Svector}) with (\ref{eq:TCovariantN}), (\ref{eq:SCovariantN}) to obtain

\begin{equation}\label{eq:2ndfunform}
\begin{split}
    \II(T,T) = -\langle T,\nabla_T N\rangle &=-e^{-2\varphi}\\
    \II(T,S) = -\langle T,\nabla_S N\rangle &=e^{-2\varphi}(t-k)\varphi'\\
    \II(S,S) = -\langle S,\nabla_S N\rangle &= \frac{e^{-2 \varphi}}{4} \bigg(e^{4\varphi}-4 k^2 t^2+4 k t^3-t^4-|\varphi'|^4-\\&2 |\varphi'|^2 (2 k^2-2 k t+t^2-2\varphi'')+4 (2 k-t) t \varphi''-4 |\varphi''|^2\bigg)
\end{split}
\end{equation}

The orientation of the base $T,S,N$ is given by the sign of $\langle T\times S,N \rangle$, which reduces to

\[\rm{sign}\bigg(-\frac{4 e^\varphi (e^{2 \varphi}-2 k t+t^2-|\varphi'|^2+2\varphi'')}{(e^{2\varphi}+t^2+|\varphi'|^2)^3}\bigg) = \rm{sign} (-e^{2\varphi}+2 k t-t^2+|\varphi'|^2- 2\varphi'')
\]

Then the (signed) area form can be calculated by taking the (signed) square root of the determinant of (\ref{eq:1stfundform})

\[\sqrt{\frac14 e^{-4\varphi} (e^{2\varphi}-2 k t+t^2-|\varphi'|^2+ 2\varphi'')^2}dtds = \frac12 e^{-2\varphi} (-e^{2\varphi}+2 k t-t^2+|\varphi'|^2- 2\varphi'')dtds
\]

The mean curvature $H=\frac12tr(\I^{-1}\II)$ 

\begin{equation}\label{eq:MeanCurvature}
    H = \frac{-(t(-2k+t) - |\varphi'|^2 + 2\varphi'')}{e^{2\varphi} + t(-2k+t) - |\varphi'|^2 + 2\varphi''}
\end{equation}

Principal curvatures $k_1, k_2$ given by 

\[k_1=-1,\quad k_2 = \frac{e^{2\varphi} - t(-2k+t) + |\varphi'|^2 - 2\varphi''}{e^{2\varphi} + t(-2k+t) - |\varphi'|^2 + 2\varphi''}
\]

\subsection{W-volume for domains with circular boundary}\label{subsec:W-vol}
The explicit way to show the affine relation (\ref{eq:affinerelation}) between $W$-volume and $\log\det\Delta$ is to show that their derivatives are equal up to a multiplicative factor of $3\pi$. Hence it is useful to have a formula for the derivative of $vol-\frac12\int Hda$. This is given to us by applying the (piecewise smooth) Schl\"afli formula (see \cite[Theorem 1]{RivinSchlenker}, \cite[Theorem 4]{Souam}) as done for \cite[Equation (41)]{KrasnovSchlenker08}. Namely, if we have a domain $int(\Sigma)$ in $\mathbb{H}^3$ bounded by a piecewise smooth surface $\Sigma$ that we perturb smoothly along a vector field $V$, then the derivative of volume can be expressed as

\begin{equation}\label{eq:WSchlafli}
    \left(vol(int(\Sigma))-\frac12 \int_\Sigma Hda\right)' = \int_\Sigma \frac12\delta H + \frac14\langle \delta I, \II_0 \rangle da + \frac12\sum_i \int_{G_i} \delta\theta_i(x)dx
\end{equation}
where $\delta$ denotes the (covariant) derivative with respect to $V$, $\II_0$ is the traceless part of the second fundamental form $\II$, $G_i$ are the codimension 1 faces of $\Sigma$ in the piecewise structure, and $\theta_i$ is the exterior dihedral angle function in $G_i$ define by the angle between the two faces of $\Sigma$ meeting at $G_i$. While $\theta_i$ is only defined up to an integer multiple of $2\pi$, its derivative $\delta\theta_i$ is real-valued. This is a generalization of the classical Schl\"afli formula for volume of polyhedra in space forms.

Now as we take a one parameter family of functions $\varphi(s,u)$, we are interested in its contribution towards the first integrand of the Schlafli formula, $2\delta H + \langle \delta I, \II_0 \rangle$. Hence we can derivate (\ref{eq:1stfundform}) and (\ref{eq:MeanCurvature}), take the inner product of the second term with $\II_0$ to obtain

\[2\delta H + \langle \delta I, \II_0 \rangle = \frac{4 e^{2\varphi} \varphi_u}{e^{2\varphi}-2 k t+t^2-|\varphi'|^2+2 \varphi''} 
\]

Hence the integrand $\int \frac12\delta H + \frac14\langle \delta I, \II_0 \rangle da$ can be expressed as

\[ \int \frac12\delta H + \frac14\langle \delta I, \II_0 \rangle da = -\frac12 \int  \phi_u dtds
\]

Restricting then to a domain $U$ whose boundaries is the union of finitely many disjoint round disk, then boundary component contributes

\begin{equation}\label{eq:WSchlafliCaterpillar}
\int \frac12\delta H + \frac14\langle \delta I, \II_0 \rangle da = -\frac12 \int_\gamma\int_0^{k(s)}  \phi_u dtds = -\frac12 \int_\gamma k(s)\phi_u ds
\end{equation}

\begin{defi}\label{defi:piecewisesphere}
Let $U$ be a domain in $\mathbb{C}$ so that $\partial U=\cup_{1\leq i\leq k}C_i$ is the union of finitely many disjoint circles $C_i$. Let $e^{2\varphi}|dz|^2$ be a $C^{2,\alpha}$ conformal metric in $\overline{U}$, and let $\lbrace P_i\rbrace$ be the collection of disjoint geodesic planes in $\mathbb{H}^3$ so that $\partial P_i=C_i$. We define
\begin{enumerate}
    \item $E_\varphi$ as the Epstein map $\Eps_\varphi : \overline{U}\rightarrow\mathbb{H}^3$
    \item $C_\varphi$ as a (leaf-wise) reparametrized restriction of the caterpillar region $\Eps_{\varphi|_{\partial U},\partial U}$ so that $C_\varphi:\partial U \times [0,1]\rightarrow \mathbb{H}^3$ satisfies that $C_\varphi(\partial U\times\lbrace1\rbrace)$ agrees with $E_\varphi|_{\partial U}$ (inducing the opposite orientation in the shared boundary) and $C_\varphi(\partial U\times\lbrace0\rbrace)$ belongs to the union of geodesic planes $\cup_{1\leq i\leq k} P_i$.
    \item $T_\varphi$ as the map $T_\varphi:\cup_{1\leq i\leq k}\mathbb{D}\rightarrow \cup_{1\leq i\leq k}P_i$ so that $T_\varphi|_{D_i}$ parametrizes the region interior to the closed curve $C_\varphi(C_i \times\lbrace0\rbrace)$, inducing the opposite orientation in the shared boundary.
\end{enumerate}
Finally, define the piecewise smooth map $S_\varphi:\mathbb{S}^2\rightarrow\mathbb{H}^3$ by gluing the maps $E_\varphi,C_\varphi,T_\varphi$. 
\end{defi}

Each caterpillar region in $C_\varphi$ meets its adjacent totally geodesic face of $T_\varphi$ orthogonally, while all the caterpillar regions meet the Epstein surface $E_\varphi$ tangentially.

Since $\mathbb{H}^3$ is contractible, the map $S_\varphi:\mathbb{S}^2\rightarrow\mathbb{H}^3$ admits a piecewise extension $f:\overline{B^3}\rightarrow\mathbb{H}^3$, and any two such extensions $f_0, f_1$ are homotopic through piecewise smooth extensions $\lbrace f_t\rbrace_{0\leq t\leq1}$. By Stokes theorem, the volume interior to $S_\varphi$
\begin{equation}
    vol(int(S_\varphi)) := \int_{\overline{B^3}} f^*(dvol_{\mathbb{H}^3})
\end{equation}
is well-defined. Similarly, we can use the differential of $E_\varphi, C_\varphi, T_\varphi$ to define area forms $da_{E_\varphi}, da_{C_\varphi}, da_{T_\varphi}$ in the respective domains. We denote then as $da_{S_\varphi}$ as the piecewise defined form given by these three `area' forms. As computed for instance in \cite{KrasnovSchlenker08}, if $H$ denotes the mean curvature at immersed point of $S_\varphi$, the form $Hda_{S\varphi}$ extends piecewise to the critical points of $S_\varphi$.

Now we proceed to define $W$-volume in domains with round boundary.

\begin{defi} Let $U$ be a domain in $\mathbb{C}$ so that $\partial U$ is a disjoint union of round circles. Let $e^{2\varphi}|dz|^2$ be a smooth conformal metric in $\overline{U}$. Then considering the notation introduced by Definition \ref{defi:piecewisesphere} and the following paragraph, we define $W(e^{2\varphi}|dz|^2)$ as
\begin{equation}\label{eq:Wvol}
    W(e^{2\varphi}|dz|^2) := vol(int(S_\varphi)) - \frac12 \int_{\mathbb{S}^2} Hda_{S\varphi} - \frac34 \int_{\partial U\times[0,1]} da_{C\varphi},
\end{equation}
\end{defi}
By abuse of notation, we will refer to $\int_{\mathbb{S}^2} Hda_{S\varphi}$ and $\int_{\partial U\times[0,1]} da_{C\varphi}$ as $\int_{S_\varphi}Hda$ and $\int_{C_\varphi}da$, respectively.

Observe that if $U$ is a fundamental domain of a Kleinian group $\Gamma$ and $e^{2\varphi}|dz|^2$ is a $\Gamma$ invariant metric, the $W$-volume $W(e^{2\varphi}|dz|^2)$ agrees with the traditional $W$-volume of the quotient manifold. This is an easy statement to verify, since the extra terms coming from $\partial U$ cancel between the components identified by some element of $\Gamma$, so the terms in (\ref{eq:Wvol}) reduce to the terms appearing in (\ref{eq:OGWvol}) for $r=0$.

Now we are ready to prove that our definition of $W$-volume satisfies the affine relation (\ref{eq:affinerelation}) with $\log\det\Delta$

\begin{theorem}[Theorem \ref{thm:main}]
Let $U$ be a domain in $\mathbb{C}$ so that $\partial U$ is a disjoint union of round circles. Let $g$ be a smooth conformal metric in $\overline{U}$ and $\varphi:\overline{U}\rightarrow\mathbb{R}$ be a smooth function. The the $W$-volume satisfies\footnote{For the first equality it is enough to ask for $\varphi$ to belong to $C^{2,\alpha}(\overline{U})$, but the second equality requires $C^\infty$ smoothness}
\begin{equation}\label{eq:Polyakov}
\begin{split}
    W(e^{2\varphi}g) - W(g) &= -\frac14\int_U |\nabla_g\varphi|^2 + Scal(g)\varphi da(g) -\frac12\int_{\partial U} k(g)\varphi ds(g) -\frac34 \int_{\partial U} \partial_n\varphi ds(g)
    \\&= 3\pi (\log\det\Delta(e^{2\varphi}g) - \log\det\Delta(g)),
\end{split}
\end{equation}
where $\nabla$ is the gradient, $Scal(.)$ is the scalar curvature, $k(.)$ is the geodesic curvature, $\partial_n$ is the derivative with respect to the inward pointing normal $n$, $da, ds$ are the area and length forms, respectively. The second equality follows from \cite[Equation (1.17)]{OsgoodPhillipsSarnak}
\end{theorem}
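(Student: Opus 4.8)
The second equality in \eqref{eq:Polyakov} is exactly the Polyakov--Alvarez anomaly formula \cite[Equation (1.17)]{OsgoodPhillipsSarnak}, so the whole content lies in the first equality, the identification of $W(e^{2\varphi}g)-W(g)$ with the displayed conformal functional. The plan is to prove it variationally. I fix $g$, connect it to $e^{2\varphi}g$ along the path $g_u=e^{2u\varphi}g$, $u\in[0,1]$, and show that $\tfrac{d}{du}W(g_u)$ equals the $u$-derivative of the right-hand side of \eqref{eq:Polyakov}; since $W(g_0)=W(g)$ and the right-hand side vanishes at $\varphi\equiv 0$, integrating in $u$ yields the claim. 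Writing $g=e^{2\varphi_0}|dz|^2$ reduces every local computation to the flat background with total conformal factor $\psi=\varphi_0+u\varphi$ and variation field $\psi_u=\varphi$, so the explicit caterpillar data \eqref{eq:1stfundform}--\eqref{eq:MeanCurvature} apply along $\partial U$ verbatim, and the integrals are well defined across the critical points of the Epstein map as in \cite{KrasnovSchlenker08} and the discussion following Definition \ref{defi:piecewisesphere}.

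The derivative of $vol(int(S_\varphi))-\tfrac12\int_{S_\varphi}Hda$ is computed with the piecewise Schl\"afli formula \eqref{eq:WSchlafli} for the closed piecewise-smooth sphere $S_\varphi=E_\varphi\cup C_\varphi\cup T_\varphi$, and I would first discard the inert pieces. Along $E_\varphi\cap C_\varphi$ the caterpillar is tangent to the Epstein surface, so the exterior dihedral angle is constant ($\equiv\pi$) and $\delta\theta=0$; along $C_\varphi\cap T_\varphi$ the caterpillar meets the planes $P_i$ orthogonally, a relation that holds for every $\varphi$ by the reflective symmetry of the caterpillar across $P_i$ and is therefore preserved under the variation, so again $\delta\theta=0$. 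The faces $T_\varphi$ lie on the fixed totally geodesic planes $P_i$, where $\II_0=0$ and $H\equiv 0$ stays $0$, so their bulk integrand $\tfrac12\delta H+\tfrac14\langle\delta I,\II_0\rangle$ vanishes. Hence only the bulk integrals over $E_\varphi$ and $C_\varphi$ survive in \eqref{eq:WSchlafli}, together with the derivative of the correction $-\tfrac34\int_{C_\varphi}da$.

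For $C_\varphi$ the bulk integrand is the one established for \eqref{eq:WSchlafliCaterpillar}, namely $-\tfrac12\varphi_u\,dt\,ds$; what is decisive is the leaf-wise range of the physical piece. Its two endpoints are $t=\partial_n\varphi$, the tangency with $E_\varphi$ (the two-dimensional Epstein point \eqref{eq:EpsteinOGSolution} over a boundary point coincides with the caterpillar point \eqref{eq:Epsteintube} at normal-jet value $t=\partial_n\varphi$), and $t=k$, the orthogonal meeting with $P_i$ (found by intersecting \eqref{eq:Epsteintube} with the hemisphere over the round circle, which gives $t=k$ independently of $\varphi$). Integrating over $[\partial_n\varphi,k]$ produces a boundary term of the shape $-\tfrac12\int_{\partial U}(k-\partial_n\varphi)\varphi_u\,ds$. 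For $E_\varphi$ I would run the two-dimensional analogue of the caterpillar computation from the Epstein fundamental forms; this produces the interior integrand of \eqref{eq:Polyakov} and recovers, in the closed case, the classical conformal variation of $W$ consistent with \eqref{eq:affinerelation}, possibly with an extra $\partial U$-term from integration by parts. Finally I would differentiate $-\tfrac34\int_{C_\varphi}da$ using the signed caterpillar area form, differentiating through the moving endpoint $t=\partial_n\varphi$ by Leibniz; this is the geometric origin of the normal-derivative term $-\tfrac34\int_{\partial U}\partial_n\varphi_u\,ds(g)$.

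The crux, and the step I expect to be hardest, is the reconciliation of all the $\partial U$-contributions. The endpoint $t=\partial_n\varphi$ of the caterpillar bulk, the boundary output of the interior term, and the derivative of the area correction each deposit integrals on $\partial U$ involving $\varphi_u$, $\partial_n\varphi\,\varphi_u$, and $\partial_n\varphi_u$ (the auxiliary $\varphi'_u$, $\varphi''_u$ terms being removed by integrating by parts in $s$ around the closed boundary curves), and one must show that they collapse to precisely $-\tfrac12\int_{\partial U}k(g)\varphi_u\,ds(g)-\tfrac34\int_{\partial U}\partial_n\varphi_u\,ds(g)$. This is delicate for two reasons: the explicit formulas are written in Euclidean arclength and curvature, so the conformal transformation laws $k(g)\,ds(g)=(k-\partial_n\varphi_0)\,ds$ and the invariance of $\partial_n\varphi\,ds$ must be applied uniformly (with care for the inward orientation of $n$) to pass to the $g$-intrinsic quantities of \eqref{eq:Polyakov}; and the coefficient $\tfrac34$ in \eqref{eq:Wvol} is not free, it is exactly the value for which the $\partial_n\varphi$ contributions of the caterpillar endpoint and of the area derivative combine into the single term $-\tfrac34\int_{\partial U}\partial_n\varphi_u\,ds(g)$ while the spurious terms cancel. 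I would therefore carry out all Euclidean-to-intrinsic conversions once at the outset and verify these cancellations before integrating in $u$.
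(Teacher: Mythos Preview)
Your strategy is the paper's: connect $g$ to $e^{2\varphi}g$ by $g_u=e^{2u\varphi}g$, differentiate $W$ using the piecewise Schl\"afli formula \eqref{eq:WSchlafli}, and assemble the pieces. Your identification of the vanishing contributions (constant dihedral angles along both seams, totally geodesic $T_\varphi$) is exactly right, and your endpoints $t=\partial_n\psi$ and $t=k$ for the caterpillar leaf are correctly located.

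Where you diverge from the paper is in expecting a delicate reconciliation of boundary terms that in fact never materialises. Two simplifications collapse your ``crux'':
\begin{itemize}
\item The Krasnov--Schlenker identity you invoke for the Epstein piece is a \emph{pointwise} equality of $2$-forms,
\[
\Big(\tfrac12\delta H+\tfrac14\langle\delta I,\II_0\rangle\Big)\,da_{E}\;=\;-\tfrac14\,Scal(g)\,\varphi_u\,da(g),
\]
so no integration by parts is performed and no $\partial U$-term is deposited by $E_\varphi$. Your anticipated ``extra $\partial U$-term'' from the interior piece is zero.
\item Your caterpillar bulk output $-\tfrac12\int_{\partial U}(k-\partial_n\psi)\,\varphi_u\,ds$ (Euclidean $k$ and $ds$, total factor $\psi$) is \emph{already} the target term, since the conformal law gives $(k-\partial_n\psi)\,ds=k(g)\,ds(g)$. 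There is nothing left over to cancel.
\end{itemize}
Consequently the only remaining contribution is the derivative of $-\tfrac34\int_{C_\varphi}da$, which must account for $-\tfrac34\int_{\partial U}\partial_n\varphi_u\,ds(g)$ on its own. The paper avoids your Leibniz-through-the-moving-endpoint computation by comparing the finite differences directly: since $\int_{C(g)}da$ depends on $g$ only through $\int_{\partial U}k(g)\,ds(g)$, one has
\[
\int_{C(e^{2\varphi}g)}da-\int_{C(g)}da=\int_{\partial U}\big(k(e^{2\varphi}g)\,ds(e^{2\varphi}g)-k(g)\,ds(g)\big)=\int_{\partial U}\partial_n\varphi\,ds(g),
\]
which is considerably cleaner than differentiating the explicit signed area density over a moving interval.

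So your proposal is correct in outline and would succeed, but the step you flag as ``hardest'' dissolves once you notice that each of the three surviving pieces ($E_\varphi$, the caterpillar bulk, and the caterpillar area) produces exactly one term of \eqref{eq:Polyakov} with no residual boundary debris; the coefficient $\tfrac34$ is fixed by the area piece alone, not by a cancellation across pieces.
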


\begin{proof}
Equivalently, we will show that

\begin{equation}\label{eq:Wvolderivative}
    \frac{d}{dt} W(g_t:=e^{2t\varphi}g) = -\frac14\int_U  Scal(g_t)\varphi da(g_t) -\frac12\int_{\partial U} k(g_t)\varphi ds(g_t) -\frac34 \int_{\partial U} \partial_{n_t}\varphi ds(g_t).
\end{equation}
Then the identity \ref{eq:Polyakov} (also known as the Polyakov-Alvarez formula) follows by integrating (\ref{eq:Wvolderivative}) from $t=0$ to $t=1$ and using the identities $Scal(g_t) = e^{-2t\varphi}(Scal(g) - 2t\Delta\varphi)$, $da(g_t) = e^{2t}da(g)$, $k(g_t)=e^{-t\varphi}(k(g)+t\partial_n\varphi)$, $\partial_{n_t}\varphi = e^{-t\varphi}\partial_{n}\varphi$, $ds(g_t)=e^{t\varphi}ds(g)$. Given the generallity of (\ref{eq:Wvolderivative}), it is enough to prove it for $t=0$.

Let us then break down this derivative as

\begin{equation}\label{eq:derivativebreakdown}
\frac{d}{dt}\bigg\vert_{t=0} W(g_t) = \left(vol(int(S(e^{2\varphi}|dz|^2))) -\frac12 \int_{S_\varphi} Hda\right)' - \left(\frac34 \int_{C_\varphi} da\right)'
\end{equation}

Using (\ref{eq:WSchlafli}), the first term of (\ref{eq:derivativebreakdown}) can be expressed as

\begin{equation}
    = \left(\int_{E(g)} + \int_{T(g)} + \int_{C(g)}\right) \frac12\delta H + \frac14\langle \delta I, \II_0 \rangle da + \frac12\left(\int_{C(g)\cap E(g)} + \int_{C(g)\cap T(g)}\right) \delta\theta(x)dx.
\end{equation}
Since the angles between the faces of $S(g)$ are constant equal to either $0,\pi/2$, all terms with $\delta\theta$ vanish. Because each component of $T(g)$ is totally geodesic, the integral  $\int_{T(g)}\frac12\delta H + \frac14\langle \delta I, \II_0 \rangle da$ vanishes as well. By \cite[Corollary 6.2]{KrasnovSchlenker08} we see that $\int_{E(g)}\frac12\delta H + \frac14\langle \delta I, \II_0 \rangle da$ is equal to $-\frac14\int_U  Scal(g_t)\varphi da(g_t)$, while by (\ref{eq:WSchlafliCaterpillar}) we have that $\int_{C(g)}\frac12\delta H + \frac14\langle \delta I, \II_0 \rangle da$ is equal to $-\frac12 \int_\gamma k(s)\phi_u ds$. Hence the first term of (\ref{eq:derivativebreakdown}) reduces to

\begin{equation}\label{eq:breakdownpart1}
= -\frac14\int_U  Scal(g_t)\varphi da(g_t) -\frac12\int_{\partial U} k(g_t)\varphi ds(g_t)
\end{equation}

For the second term of (\ref{eq:derivativebreakdown}) it is rather easier to directly compare $-\frac34 Area(e^{2\varphi}g)$ and $-\frac34 Area(g)$ as

\begin{align*}
    Area(g) &= \int_{\partial U}\int_0^{k(g)(s)}dtds = \int k(g)ds(g)\\
    Area(e^{2\varphi}g) &= \int_{\partial U}\int_0^{k(e^{2\varphi}g)(s)}dtds = \int k(e^{2\varphi}g)ds(e^{2\varphi}g),
\end{align*}
so then by using that $k(e^{2\varphi}g)ds(e^{2\varphi}g) = (k(g) + \partial_n\varphi)ds(g)$ we have

\[
Area(e^{2\varphi}g) - Area(g) =  \int_{\partial U} \partial_n\varphi ds(g)
\]
from where it follows that the second term of (\ref{eq:derivativebreakdown}) equals to

\begin{equation}\label{eq:breakdownpart2}
-\frac34\int_{\partial U} \partial_n\varphi ds(g)
\end{equation}

Hence by combining (\ref{eq:breakdownpart1}) and (\ref{eq:breakdownpart2}) we have that (\ref{eq:derivativebreakdown}) follows.

\end{proof}

\section{Applications}\label{sec:applications}

\subsection{Renormalized volume of classical Schottky groups}\label{subsec:negVR}

By \cite[Theorem 1]{OsgoodPhillipsSarnak} we know that under the condition $\int_{\partial U} kds\geq 0$, the conformal metric $g_0$ of maximum determinant is given by the hyperbolic metric in $U$ with totally geodesic boundary. Observe also that any $\Gamma$-invariant metric $e^{2\varphi}|dz|^2$ (for some Kleinian group $\Gamma$) satisfies $\int_{\partial U} kds = 0$, and the conformal metric $g_0$ is the hyperbolic metric invariant by the group $\Gamma_0$ generated by the reflections along the geodesic planes at each component of $\partial U$, so identified summands in $\int_{\partial U} kds$ cancel.

Our first application uses this maximality, that in particular concludes that every Riemann surface has a Schottky uniformization with negative $\VR$ if a classical Schottky uniformization has sufficiently thick fundamental domain.

\begin{theorem}[Theorem \ref{thm:main1}]
Let $M$ be the hyperbolic handlebody obtained by the classical Schottky uniformization of a surface $\Sigma$ of genus $g$. Then $\VR(M) < (6g-8)\pi$. Moreover, if the distance between geodesic planes on the boundary of a fundamental domain for $M$ is at least $4$, then $\VR(M)<-2\pi$.
\end{theorem}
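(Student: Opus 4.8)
The plan is to combine the affine comparison of Theorem~\ref{thm:main} with the Osgood--Phillips--Sarnak maximality, and then to evaluate $W$ on the extremal metric directly. Let $U\subset\overline{\mathbb{C}}$ be a classical fundamental domain bounded by $2g$ round circles, let $g_{hyp}$ be the hyperbolic metric on $\Sigma=\partial M$ pulled back to $U$, and let $g_0$ be the hyperbolic metric on $U$ with totally geodesic boundary, invariant under the reflection group $\Gamma_0$. By the remark following the definition of $W$, $\VR(M)=W(g_{hyp})$. Since $g_{hyp}$ is $\Gamma$-invariant it satisfies $\int_{\partial U}k\,ds=0\ge 0$, so the Osgood--Phillips--Sarnak theorem (\cite[Theorem 1]{OsgoodPhillipsSarnak}) applies and gives $\log\det'\Delta(g_{hyp})\le\log\det'\Delta(g_0)$. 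Feeding this into the affine relation \eqref{eq:affinerelation} yields
\[
\VR(M)=W(g_{hyp})=W(g_0)+3\pi\big(\log\det'\Delta(g_{hyp})-\log\det'\Delta(g_0)\big)\le W(g_0),
\]
so both bounds in the theorem reduce to a single upper bound on $W(g_0)$, with the final strict inequalities coming from the strict bounds on $W(g_0)$ below.

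Next I would compute $W(g_0)$ geometrically. The essential simplification is that $g_0$ has geodesic boundary, so $k(g_0)\equiv 0$: the caterpillar $C_{g_0}$ degenerates, $Area(C_{g_0})=0$, and the Epstein surface $E_{g_0}$ abuts the geodesic planes $P_i$ directly. Moreover $\Gamma_0$-invariance makes $g_0$ symmetric under the inversions fixing each $C_i$, so $E_{g_0}$ meets each $P_i$ orthogonally along a compact curve $\beta_i\subset P_i$. Consequently $S_{g_0}=E_{g_0}\cup T_{g_0}$ bounds a compact region $\Omega\subset\HH^3$, the faces $T_{g_0}$ are totally geodesic (hence contribute nothing to $\int H\,da$), and $W(g_0)=vol(\Omega)-\tfrac12\int_{E_{g_0}}H\,da$.

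To evaluate this I would use the normal (equidistant) flow of $\partial\Omega$ together with the Riccati equation and Gauss--Bonnet, which give the clean infinitesimal identity $\frac{d}{d\rho}\big(vol-\tfrac12\int H\,da\big)=-\pi\chi(\partial\Omega_\rho)$; as a check, a geodesic ball of radius $r$ has $vol-\tfrac12\int H\,da=-2\pi r$. Integrating this flow inward to the cut locus of $\Omega$, and accounting for the orthogonal edges $\beta_i$ through the dihedral-angle terms of the piecewise Schl\"afli formula \eqref{eq:WSchlafli}, I expect $W(g_0)$ to split as a topological contribution governed by $\chi(U)=2-2g$ plus a manifestly nonpositive width term. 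The topological contribution should produce the universal bound $W(g_0)<(6g-8)\pi=-3\pi\chi(U)-2\pi$, giving the first assertion.

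For the refined estimate, the hypothesis that the planes $P_i$ are pairwise at distance $\ge 4$ forces $\Omega$ to be wide: a ball of definite radius embeds between the separated walls, so the inradius of $\Omega$ is bounded below, and the $-2\pi\rho$ mechanism above drives $W(g_0)$ below $-2\pi$. I expect the main obstacle to be exactly this quantitative step: controlling the position and mean curvature of the Epstein cap $E_{g_0}$ in terms of the configuration of the planes, and handling the cut-locus/pinching behavior of the inward flow when $\Omega$ is not convex, so that the topological term and the width term separate cleanly. By contrast, the comparison step and the degeneracy of the caterpillar are routine; the geometric evaluation of $W(g_0)$ and the distance-$4$ lower bound on the inradius are where the real work lies.
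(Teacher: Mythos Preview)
Your reduction $\VR(M)=W(g_{hyp})\le W(g_0)$ via Osgood--Phillips--Sarnak is exactly the paper's first step, and your observation that the caterpillar degenerates for $g_0$ (so $W(g_0)=vol(\Omega)-\tfrac12\int_{E_{g_0}}H\,da$ with $E_{g_0}\perp P_i$) is correct.  Where you diverge is in trying to bound $W(g_0)$ directly by an inward equidistant flow to the cut locus.  The paper does \emph{not} do this: it makes a \emph{second} monotonicity step, comparing $W(g_0)$ to the $W$-volume of the convex core $\mathcal{C}$ of the reflection group $\Gamma_0$.  Since $g_0$ has curvature $-1\le 0$, monotonicity of $W$ along nonpositively curved conformal metrics (equivalently \cite[(1.17)]{OsgoodPhillipsSarnak}) gives $W(g_0)<W(\mathcal{C})$.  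Now $\mathcal{C}$ is a genuine compact polytope: its faces are the right-angled hexagons $H_{\{1,2,3\}}$ joining the planes $P_i$ together with the convex polygons $P_C\subset P_i$, and
\[
W(\mathcal{C})=vol(\mathcal{C})-\tfrac14\sum_e\theta(e)\ell(e),
\]
where $e$ runs over edges between hexagonal faces.  Gauss--Bonnet on each $P_C$ gives $\sum_{e\sim P_C}\theta(e)=Area(P_C)+2\pi$; summing over the $2g$ boundary circles and using the hyperbolic isoperimetric inequality $vol(\mathcal{C})<\tfrac12 Area(\partial\mathcal{C})$ yields $vol(\mathcal{C})<\big(\sum_e\theta(e)\big)-2\pi$.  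The first bound follows because there are $6g-6$ such edges and $\theta(e)\le\pi$; the second because $\ell(e)\ge 4$ forces $\tfrac14\sum\theta(e)\ell(e)\ge\sum\theta(e)$, whence $W(\mathcal{C})<-2\pi$.

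The gap in your proposal is not cosmetic.  In your flow identity $\frac{d}{d\rho}\big(vol-\tfrac12\int H\big)=-\pi\chi(\partial\Omega_\rho)$ the Euler characteristic is that of the \emph{level surface}, which is a sphere ($\chi=2$), not $\chi(U)=2-2g$; so the genus cannot enter through the smooth part of the flow.  All of the $g$-dependence would have to come from the cut-locus endgame and the dihedral corrections along $\beta_i$, which is exactly the part you label ``the main obstacle'' and for which you give no mechanism.  Moreover the Epstein cap $E_{g_0}$ need not be embedded or convex, so the inward flow need not even be well-posed in the way you use it; and your inradius heuristic (embed a ball, invoke $-2\pi\rho$) ignores the piecewise structure and the nontrivial topology changes at the cut locus.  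The paper sidesteps all of this by trading the curved region $\Omega$ for the polytope $\mathcal{C}$, where the combinatorics are finite and the only analytic input is the isoperimetric inequality.
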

\begin{proof}
Let us first aim to show the second inequality.

Take $U\subset \mathbb{C}$ a fundamental domain for $M= \mathbb{H}^3/\Gamma$ for that $\partial U$ is the union of finitely many disjoint round circles. Let then $e^{2\varphi}|dz|^2$ be the hyperbolic $\Gamma$-invariant metric in $U$, and as we discussed at the start of this section, let $g_0$ be the conformal $\Gamma_0$-invariant metric in $U$ of maximal determinant, where $\Gamma_0$ is the group generated by the reflections along the geodesic planes at each component of $\partial U$. Then by \cite[Theorem 1]{OsgoodPhillipsSarnak} we have that
\begin{equation}
    \VR(M)=W(e^{2\varphi}|dz|^2) \leq W(g_0),
\end{equation}
so it is sufficient to prove negativity of $W(g_0)$. For this, we will compare $W(g_0)$ against the $W$-volume of the convex core associated to $g_0$.

One can describe the convex core of $g_0$ as follows. We will denote each component of $\partial U$ by a capital letter $C$, while we will denote by $P$ the geodesic plane with boundary at infinity equal to $C$. For each pair of components $C_{1,2}$ of $\partial U$ we can take the geodesic segment $\alpha_{1,2}$ that realizes the distance between the geodesic planes $P_{1,2}$ with boundary at infinity equal to $C_{1,2}$. For each triple of components $C_{\lbrace 1,2,3\rbrace}\subset \partial U$ we have a right-angled hexagon $H_{\lbrace 1,2,3\rbrace}$ made out of the segments $\alpha_{12},\alpha_{13},\alpha_{23}$ and the segments $\beta_{1,2,3}$ in $P_{1,2,3}$ joining the endpoints of the adjacent $\alpha$'s. This 6 segments belong to the unique geodesic plane orthogonal to $P_1, P_2, P_3$.

At each component $C\subset \partial U$ we consider all vertices coming from the $\alpha$ segments and edges given by the $\beta$ segments. Hence in $P$ we have a maximal convex polygon $P_C$, which can be joined to other convex polygons $P_{C'}$ by its adjacent right-angled hexagons $H$'s. Then the convex polygons $P_C$ and their adjacent right-angled hexagons $H$'s assemble into the boundary of a convex polyhedra, which is the convex core $\mathcal{C}$ of $\Gamma_0$ when we add the faces $P_C$ with multiplicity 2.

In order to define $W$-volume of $\mathcal{C}$ we have two equivalent options. We either consider directly the formula (\ref{eq:Wvol}) using that the boundary term vanish and $\int 2Hda$ is given by the sum 
\[\sum_e \theta(e)\ell(e)
\]
where $e$ runs along the edges of $\mathcal{C}$ adjacent to two right-angled hexagons, $\theta(e),\ell(e)$ are the exterior dihedral angle and length of $e$, respectively.

Equivalently, we can take a $C^{1,1}$ metric in $U$ define by the first contact horosphere (so that the previous construction recovers $\mathcal{C}$) and give the associated $W$-volume. It could also be helpful to visualize instead the torsion free degree 2 cover of $\Gamma_0$.

Regardless, by monotonicity of $W$-volume for non-positive metrics (using either \cite[Proposition 3.11]{Schlenker} for $W$-volume or \cite[Equation 1.17]{OsgoodPhillipsSarnak} for $\log\det\Delta$) we have that
\[W(g_0) < W(\mathcal{C}),
\]
so if we want to show that $\VR(M)<0$ it is sufficient to show $W(\mathcal{C})$ is negative, or equivalently, that $vol(\mathcal{C})\leq \frac14\sum_e \theta(e)\ell(e)$. Taking $\ell$ to be the minimum distance between the geodesic planes resting on top of $\partial U$, we can even further reduce to prove $vol(\mathcal{C})\leq \frac\ell4\sum_e \theta(e)$. Recall that the domain $U$, $\partial U$ has $2g$ components ($g>1$).

For each convex polygon $P_C$ in $C\subseteq \partial U$ we have by the orthogonality at $P_C$ and Gauss-Bonnet
\begin{equation}\label{eq:anglerelabel}
    \sum_{e \text{ adjacent to } P_C} \theta(e) = Area(P_C)+2\pi
\end{equation}

Since the right-angled hexagon faces of $\mathcal{C}$ assemble into a $2g$ holed sphere, we have that

\begin{equation}
    Area(\partial C) = 2\pi(2g-2) + \sum_{C\subset\partial U} Area(P_C)
\end{equation}

Combining this with the isoperimetric inequality, we have

\begin{equation}
    vol(\mathcal{C}) < \frac12Area(\partial C) = \pi(2g-2) + \sum_{C\subset\partial U} \frac12 Area(P_C)
\end{equation}

Replacing (\ref{eq:anglerelabel}) for each of the $n$ components of $\partial U$ and using that each edge $e$ is adjacent to exactly 2 planes, we get

\begin{equation}
    vol(\mathcal{C}) < \pi(2g-2) + \left(\sum_e \theta(e)\right) - 2g\pi  = \left(\sum_e \theta(e)\right) -2\pi
\end{equation}

Hence the desired inequality $vol(\mathcal{C})\leq \frac\ell4\sum_e \theta(e)$ would follow if $\ell\geq 4$. In such case, we will have then that $\VR(M)<-2\pi$.

For the first inequality, we still have $\VR(M)< vol(\mathcal{C}) < \left(\sum_e \theta(e)\right) -2\pi$. By Euler characteristic we have that there are $6g-6$ edges, and each exterior dihedral angle is bounded by $\pi$. Hence it follows that $\VR(M)<(6g-6)\pi-2\pi=(6g-8)\pi$.
\end{proof}

\begin{remark}
The attentive reader will observe that we could have ran the argument using $W$-volume along metrics invariant by some boundary identifications, where the identifications are allowed to change. This would have not required the use of boundary terms since they will always cancel throughout these path of metrics. Regardless, it is of independent interest to realize geometrically in $\mathbb{H}^3$ the determinant of the Laplacian. Addressing a well-known maxima, our construction `draws' the `full glissando'\footnote{\emph{Glissando} is the continuous slide between two musical notes. Here we are loosely using the term \emph{full glissando} to symbolize that $\log\det\Delta$ is `adding' the logarithm of all possible frequencies.} of a drum with round edges.
\end{remark}

\subsection{Loewner energy of $C^{2,\alpha}$ Jordan curves}\label{subsec:Loewner}

As it was observed to us by Yilin Wang, we can use our description to describe the Loewner energy of a Weil--Petersson Jordan curve. 

Let $\gamma$ be a $C^{2,\alpha}$ Jordan curve. Take $f_1:\mathbb{D}\rightarrow\overline{\mathbb{C}}, f_2:\mathbb{D}^*\rightarrow\overline{\mathbb{C}}$ uniformization maps of the components of $\overline{\mathbb{C}}\setminus\gamma$, that extend up to second order to $\gamma$. Then we can define conformal metrics $g_{1,2} = \frac{4e^{2\varphi}}{(1+|z|^2)^2}|dz|^2$ on $\mathbb{D}, \mathbb{D}^*$ by taking $f_{1,2}^*(g_{\mathbb{S}^2})$, where $g_{\mathbb{S}^2}$ is the round metric in $\mathbb{S}^2$. Hence the Loewner energy of $\gamma$ can be calculated as (see proof of \cite[Theorem 7.3]{Wang})

\begin{equation}
    I^L(\gamma) = 12(\log\det(g_{\mathbb{S}^2}|_{\mathbb{D}}) + \log\det(g_{\mathbb{S}^2}|_{\mathbb{D}^*})- \log\det(g_1)- \log\det(g_2)) 
\end{equation}
It is a simple exercise to verify that the Epstein maps for $g_{1,2}$ have constant image equal to $(0,0,1)$, which belongs to the totally geodesic plane with boundary $\mathbb{S}^1=\partial\mathbb{D}=\partial\mathbb{D^*}$. Hence both piecewise-smooth maps associated to $g_{1,2}$ for the definition of $W$-volume are constant, from where it follows that $W(g_{\mathbb{S}^2}|_{\mathbb{D}}) = W(g_{\mathbb{S}^2}|_{\mathbb{D}^*})=0$. Hence by applying the affine relation (\ref{eq:affinerelation}) of Theorem \ref{thm:main} we obtain that
\[ I^L(\gamma) = \frac{4}{\pi}(W(g_{\mathbb{S}^2}|_{\mathbb{D}}) - W(g_1) + W(g_{\mathbb{S}^2}|_{\mathbb{D}^*}) - W(g_2)) = -\frac{4}{\pi}(W(g_1)+W(g_2)),
\]
which is the conclusion of Theorem \ref{thm:main2}.

\bibliographystyle{amsalpha}
\bibliography{mybib}

\end{document}